\documentclass[platex,dvipdfmx]{amsart}
\usepackage{amssymb}
\usepackage{braket}
\usepackage{mathrsfs}
\usepackage{amsrefs}
\usepackage{ifthen}
\usepackage{graphicx}

\usepackage{tikz}
\usetikzlibrary{patterns}
\usepackage{comment} 
\usepackage{mleftright}
\usepackage{hyperref}
\usepackage{cleveref}
\usepackage[all]{xy}
\usepackage{color}
\usepackage{ulem}

\theoremstyle{plain}
\newtheorem{theo}{Theorem}[section]
\crefname{theo}{Theorem}{Theorems}
\Crefname{theo}{Theorem}{Theorems}
\newtheorem{prop}[theo]{Proposition}
\crefname{prop}{Proposition}{Propositions}
\Crefname{prop}{Proposition}{Propositions}
\newtheorem{lem}[theo]{Lemma}
\crefname{lem}{Lemma}{Lemmas}
\Crefname{lem}{Lemma}{Lemmas}
\newtheorem{cor}[theo]{Corollary}
\crefname{cor}{Corollary}{Corollaries}
\Crefname{cor}{Corollary}{Corollaries}
\newtheorem {claim}[theo]{Claim}
\crefname{claim}{Claim}{Claims}
\Crefname{claim}{Claim}{Claims}

\crefname{property}{Property}{Properties}
\Crefname{property}{Property}{Properties}

\crefname{problem}{Problem}{Problems}
\Crefname{problem}{Problem}{Problems}

\crefname{sublem}{Sublemma}{Sublemmas}
\Crefname{sublem}{Sublemma}{Sublemmas}

\theoremstyle{definition}

\crefname{defi}{Definition}{Definitions}
\Crefname{defi}{Definition}{Definitions}

\crefname{notation}{Notation}{Notations}
\Crefname{notation}{Notation}{Notations}

\crefname{convention}{Convention}{Conventions}
\Crefname{convention}{Convention}{Conventions}

\crefname{cond}{Condition}{Conditions}
\Crefname{cond}{Condition}{Conditions}

\crefname{assum}{Assumption}{Assumptions}
\Crefname{assum}{Assumption}{Assumptions}

\theoremstyle{remark}

\crefname{rem}{Remark}{Remarks}
\Crefname{rem}{Remark}{Remarks}

\crefname{ex}{Example}{Examples}
\Crefname{ex}{Example}{Examples}

\crefname{section}{Section}{Sections}
\Crefname{section}{Section}{Sections}
\crefname{subsection}{Subsection}{Subsections}
\Crefname{subsection}{Subsection}{Subsections}
\crefname{figure}{Figure}{Figures}
\Crefname{figure}{Figure}{Figures}

\newtheorem*{acknowledgement}{Acknowledgement}

\newcommand{\Z}{\mathbb{Z}}

\newcommand{\R}{\mathbb{R}}
\newcommand{\C}{\mathbb{C}}

\newcommand{\CP}{\mathbb{CP}}
\newcommand{\scrS}{{\mathscr S}}

\newcommand{\Homeo}{\operatorname{\mathrm{Homeo}}}
\newcommand{\SO}{\operatorname{\mathrm{SO}}}
\newcommand{\Aut}{\operatorname{\mathrm{Aut}}}

\title{A note on exotic families of 4-manifolds}
\author{Tsuyoshi Kato}
\address{Department of Mathematics, Faculty of Science, Kyoto University, Kitashirakawa Oiwake-cho, Sakyo-ku, Kyoto 606-8502, Japan}
\email{tkato@math.kyoto-u.ac.jp}
\author{Hokuto Konno}
\address{Graduate School of Mathematical Sciences, the University of Tokyo, 3-8-1 Komaba, Meguro, Tokyo 153-8914, Japan \\and\\
RIKEN iTHEMS, Wako, Saitama 351-0198, Japan}
\email{konno@ms.u-tokyo.ac.jp}
\author{Nobuhiro Nakamura}
\address{Department of Mathematics, Osaka Medical and Pharmaceutical University, 2-7 Daigakumachi, Takatsuki, Osaka, 569-8686, Japan}
\email{nobuhiro.nakamura.jl@ompu.ac.jp}

\date{\today}

\begin{document}
\maketitle

\begin{abstract}
We present  a pair of smooth fiber bundles over the circle
with a common $4$-dimensional fiber with the following properties: (1) their total spaces are diffeomorphic to each other; (2) they are isomorphic to each other as topological fiber bundles; (3) they are not isomorphic to each other as smooth fiber bundles.
In particular, we exhibit an example with non-simply-connected fiber.
\end{abstract}

\section{Introduction}
It is 
fundamental  in differential topology
to ask  existence and uniqueness of a smooth structure 
on a given topological manifold.
These problems are formulated as the smoothing of a topological manifold and the study of exotic structures respectively.
On fiber bundles, one may ask family versions of these questions, but a subtle phenomenon lurks here:
in \cite{KKN} the authors gave an example of a topological fiber bundle with a $4$-dimensional fiber such that the total space is smoothable as a manifold, but the bundle cannot be smoothed as a fiber bundle (i.e. cannot be reduced to a smooth fiber bundle).
The purpose of this short note is to point out a similar subtlety on the problem of exotic structures for families.

On smooth fiber bundles, one can consider (at least) two types of the notion of `exotic'.
The first one is exotic manifold structures on the total space.
The second is as fiber bundles, namely, if one has smooth fiber bundles $E_{1}$ and $E_{2}$ with a common fiber and base, one may say that $E_{1}$ and $E_{2}$ are `exotic' if they are isomorphic to each other as topological bundles (i.e. bundles whose structure group is the homeomorphism group), but not isomorphic as smooth fiber bundles (i.e. bundles whose structure group is the diffeomorphism group).
We show that, even if $E_{1}$ and $E_{2}$ are exotic in the second sense, it is not necessary that they are exotic in the first sense, in the case that the fiber is of dimension $4$.
First we note such a result for bundles with fiber simply-connected $4$-manifold:


\begin{prop}
\label{main thm}
There exist smooth fiber bundles $E_{1}$ and $E_{2}$ over $S^{1}$ with a common fiber $X$ which is a smooth closed simply-connected $4$-manifold with the following properties:
\begin{enumerate}
\item the total spaces of $E_{1}, E_{2}$ are diffeomorphic to each other;
\item $E_{1}, E_{2}$ are isomorphic to each other as topological fiber bundles;
\item $E_{1}, E_{2}$ are not isomorphic to each other as smooth fiber bundles.
\end{enumerate}
\end{prop}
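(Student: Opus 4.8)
The plan is to realize $E_{1}$ and $E_{2}$ as mapping tori over $S^{1}$. For $f\in\Diff(X)$ let $E_{f}=\bigl(X\times[0,1]\bigr)/\bigl((x,1)\sim(f(x),0)\bigr)$ be the mapping torus of $f$; every smooth fiber bundle over $S^{1}$ with fiber $X$ is isomorphic to some $E_{f}$, and $E_{f}$ is isomorphic to $E_{g}$ as a smooth (resp.\ topological) fiber bundle over $S^{1}$ precisely when $f$ and $g$ are conjugate up to smooth (resp.\ topological) isotopy (up to also replacing $g$ by $g^{-1}$, which is immaterial when $g=\mathrm{id}_{X}$). I would set $E_{1}=X\times S^{1}=E_{\mathrm{id}_{X}}$ and $E_{2}=E_{f}$ for a suitable $f\in\Diff(X)$. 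Since the conjugacy class of $\mathrm{id}_{X}$ is trivial, property~(3) then becomes the statement that $f$ is \emph{not} smoothly isotopic to $\mathrm{id}_{X}$, whereas property~(2) follows once $f$ is topologically isotopic to $\mathrm{id}_{X}$. So the task reduces to exhibiting a closed simply-connected smooth $4$-manifold $X$ with a diffeomorphism $f$ that is topologically but not smoothly isotopic to the identity, plus a verification of~(1).

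For~(1) the key point is a lemma: \emph{if $f$ is smoothly pseudo-isotopic to $\mathrm{id}_{X}$, then $E_{f}$ is diffeomorphic to $X\times S^{1}$.} To prove it, take a pseudo-isotopy $F\colon X\times[0,1]\to X\times[0,1]$ with $F(x,0)=(x,0)$ and $F(x,1)=(f(x),1)$, and compose with the quotient projection onto $E_{f^{-1}}=\bigl(X\times[0,1]\bigr)/\bigl((y,1)\sim(f^{-1}(y),0)\bigr)$. The points $(x,0)$ and $(x,1)$ of $X\times[0,1]$, which are identified in $E_{\mathrm{id}_{X}}=X\times S^{1}$, are then sent to the class of $(x,0)$ and the class of $(f(x),1)$ respectively; but by the defining relation of $E_{f^{-1}}$ the latter is again the class of $(f^{-1}(f(x)),0)=(x,0)$. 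Hence $F$ descends to a diffeomorphism $X\times S^{1}\xrightarrow{\ \cong\ }E_{f^{-1}}$, and composing with the evident diffeomorphism $E_{f^{-1}}\cong E_{f}$ (reverse the $S^{1}$ coordinate) gives $X\times S^{1}\cong E_{f}$. Crucially, $F$ does not respect the projection to $S^{1}$, so this is not a bundle isomorphism --- which is exactly what lets~(1) and~(3) hold at the same time.

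It remains to produce $f$. By a theorem of Ruberman, refined in later work via families Seiberg--Witten (or Bauer--Furuta) invariants --- see for instance Baraglia--Konno, and compare the methods of \cite{KKN} --- there is a closed simply-connected smooth $4$-manifold $X$ and $f\in\Diff(X)$ that is topologically isotopic to $\mathrm{id}_{X}$ but not smoothly isotopic to it; for such $f$, properties~(2) and~(3) hold by the first paragraph. Being topologically isotopic to the identity, $f$ is in particular homotopic to $\mathrm{id}_{X}$, hence smoothly pseudo-isotopic to $\mathrm{id}_{X}$ by the work of Perron and Quinn on pseudo-isotopy of $4$-manifolds; the lemma then yields~(1).

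I expect the existence of $f$ to be the only real obstacle. Conditions~(1) and~(3) pull in opposite directions --- (1) wants $E_{f}$ to be indistinguishable from $X\times S^{1}$ as a smooth manifold, (3) wants $f$ to be genuinely exotic --- and they can be reconciled only because, in dimension $4$, a diffeomorphism may be pseudo-isotopic, yet not isotopic, to the identity. The non-isotopy is exactly what the gauge-theoretic invariant detects; by contrast, the mapping-torus lemma above and the passage from ``homotopic to $\mathrm{id}_{X}$'' to ``topologically isotopic'' and to ``smoothly pseudo-isotopic'' to $\mathrm{id}_{X}$ are standard inputs.
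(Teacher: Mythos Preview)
Your strategy is essentially the paper's own: take $E_{1}=X\times S^{1}$ and $E_{2}$ the mapping torus of a Ruberman diffeomorphism $f$, deduce (2) and (3) from the properties of $f$, and deduce (1) from the fact that a smooth pseudo-isotopy between $f$ and $\mathrm{id}_{X}$ induces a diffeomorphism of the mapping tori. Your explicit verification of the mapping-torus lemma is more detailed than the paper's one-line ``$F$ naturally induces a diffeomorphism between $E_{1}$ and $E_{2}$''.

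One correction is needed. For (1) you need a \emph{smooth} pseudo-isotopy from $f$ to $\mathrm{id}_{X}$, and you attribute this to ``the work of Perron and Quinn on pseudo-isotopy of $4$-manifolds''. That is not what those papers supply: Perron and Quinn work in the topological category, and their headline result goes the other direction (topological pseudo-isotopy $\Rightarrow$ topological isotopy). The statement you actually need --- that a diffeomorphism of a closed simply-connected smooth $4$-manifold acting trivially on $H_{2}$ is smoothly pseudo-isotopic to the identity --- is Kreck's theorem \cite[Theorem~1]{Kreck}, which is exactly what the paper invokes. Once you swap in the correct reference, your argument is complete and matches the paper's.
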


\begin{proof}
Let $X=2k\CP^{2}\#l(-\CP^{2})$, where $k \geq 2$ and $l \geq 10k+1$.
This is a closed, oriented, simply-connected, and non-spin $4$-manifold.
Ruberman~\cite{Rub} constructed a self-diffeomorphism $f$ of $X$ such that $f$ is topologically isotopic to the identity of $X$, but not smoothly isotopic to the identity.
Let $E_{1}$ be the product bundle $S^{1} \times X$ over $S^{1}$ and $E_{2}$ be the mapping torus of $f$.
By the properties of $f$, the conditions (2), (3) in the theorem are satisfied.
Therefore it suffices to check (1).


Since the induced homomorphism of $f$ on $H_2(X;\Z)$ is the identity map, $f$ is pseudo-isotopic to the identity by Kreck's theorem \cite[Theorem 1]{Kreck}.
A pseudo-isotopy between the identity and $f$ is given by a diffeomorphism $F\colon X\times[0,1]\to X\times[0,1]$ such that $F|_{X\times\{0\}}=\mathrm{id}$ and $F|_{X\times\{1\}}=f$.
Then $F$ naturally induces a diffeomorphism between $E_1$ and $E_2$.
(D.~Ruberman kindly pointed out the argument in this paragraph.)
\end{proof}



The fact that  the self-diffeomorphism constructed by Ruberman~\cite{Rub} is topologically isotopic to identity relies on the classical result of Quinn~\cite{Quinn1} and Perron~\cite{Perron}.
Their result is proved only for simply-connected $4$-manifolds.
Moreover, Kreck's theorem \cite[Theorem 1]{Kreck} is also proved only for simply-connected 4-manifolds.
Therefore the proof of \cref{main thm} cannot be applied to non-simply-connected $4$-manifolds straightforwardly.
In \cref{label: Non-simply-connected example}, we shall prove a similar statement to \cref{main thm} for a non-simply-connected 4-manifold using a different argument.

\section{Non-simply-connected example}
\label{label: Non-simply-connected example}

\begin{theo}
\label{main thm2}
There exist smooth fiber bundles $E_{1}$ and $E_{2}$ over $S^{1}$ with a common fiber $X$ which is a smooth closed non-simply-connected $4$-manifold with the following properties:
\begin{enumerate}
\item the total spaces of $E_{1}, E_{2}$ are diffeomorphic to each other;
\item $E_{1}, E_{2}$ are isomorphic to each other as topological fiber bundles;
\item $E_{1}, E_{2}$ are not isomorphic to each other as smooth fiber bundles.
\end{enumerate}
\end{theo}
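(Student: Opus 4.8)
The plan is to build $X$ and a self-diffeomorphism out of Ruberman's simply-connected examples by forming a connected sum with $S^{1}\times S^{3}$. Let $X_{0}=2k\CP^{2}\#l(-\CP^{2})$ with $k\geq2$ and $l\geq10k+1$, exactly as in the proof of \cref{main thm}, and let $f_{0}\colon X_{0}\to X_{0}$ be Ruberman's diffeomorphism, which is topologically isotopic to $\mathrm{id}_{X_{0}}$ but not smoothly isotopic to it. Using the form of Ruberman's construction together with a smooth isotopy, I would arrange that $f_{0}$ restricts to the identity on an embedded closed $4$-ball $B\subset X_{0}$. Set $X:=X_{0}\#(S^{1}\times S^{3})$, performing the connected sum inside $B$, and let $f:=f_{0}\#\mathrm{id}_{S^{1}\times S^{3}}\colon X\to X$; then $X$ is a closed, oriented, non-simply-connected $4$-manifold with $\pi_{1}(X)\cong\Z$. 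Take $E_{1}:=S^{1}\times X$ and $E_{2}:=$ the mapping torus of $f$; it remains to verify (1)--(3).

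Since smooth (resp.\ topological) fiber bundles over $S^{1}$ with fiber $X$ are classified by conjugacy classes in $\pi_{0}\Diff(X)$ (resp.\ $\pi_{0}\Homeo(X)$), with the product bundle corresponding to the identity, items (2) and (3) are equivalent to the single assertion that $f$ is topologically isotopic to $\mathrm{id}_{X}$ but not smoothly isotopic to it. For the topological half I would \emph{localize} the Quinn--Perron isotopy of $f_{0}$ near $B$. Restriction to $B$ gives a fibration $\Homeo(X_{0}\ \mathrm{rel}\ B)\to\Homeo(X_{0})\to\Emb(B,X_{0})$, whose base is homotopy equivalent to a frame bundle of $X_{0}$; a short computation with its long exact sequence, using $\pi_{1}(X_{0})=0$ and $w_{2}(X_{0})\neq0$ (here is where the non-spin hypothesis enters), shows that this frame bundle is $1$-connected, so $\pi_{0}\Homeo(X_{0}\ \mathrm{rel}\ B)\to\pi_{0}\Homeo(X_{0})$ is injective. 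As $f_{0}$ is the identity on $B$ and topologically isotopic to $\mathrm{id}_{X_{0}}$, it follows that $f_{0}$ is topologically isotopic to $\mathrm{id}_{X_{0}}$ rel $B$; extending this isotopy by the identity over the $S^{1}\times S^{3}$ part of $X$ shows that $f$ is topologically isotopic to $\mathrm{id}_{X}$, giving (2).

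For (1), I would show --- exactly as in the proof of \cref{main thm} --- that $f$ is pseudo-isotopic to $\mathrm{id}_{X}$. Since $f_{0}$ acts trivially on $H_{2}(X_{0};\Z)$, Kreck's theorem gives a pseudo-isotopy $F\colon X_{0}\times[0,1]\to X_{0}\times[0,1]$ from $\mathrm{id}_{X_{0}}$ to $f_{0}$; as $f_{0}$ is already the identity on $B$, I would modify $F$ (running Kreck's construction away from $B$, or straightening $F$ near $B\times[0,1]$ by the same kind of obstruction argument as above) so that $F$ is the identity on $B\times[0,1]$. Extending $F$ by the identity over the $S^{1}\times S^{3}$ part then produces a pseudo-isotopy from $\mathrm{id}_{X}$ to $f$, so the mapping torus of $f$ is diffeomorphic to $S^{1}\times X$, which is (1).

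The crux, and the step I expect to be the main obstacle, is (3): $f=f_{0}\#\mathrm{id}_{S^{1}\times S^{3}}$ is \emph{not} smoothly isotopic to $\mathrm{id}_{X}$. This does not follow formally from the analogous fact for $f_{0}$ and requires gauge-theoretic input. By construction, the mapping torus of $f$ is the fiberwise connected sum of the mapping torus of $f_{0}$ with the trivial bundle $S^{1}\times(S^{1}\times S^{3})\to S^{1}$, so one needs a gluing / connected-sum formula showing that the family invariant detecting $f_{0}$ on $X_{0}$ --- Ruberman's Donaldson-type invariant of the mapping torus, or a family Seiberg--Witten or Bauer--Furuta invariant --- continues to detect $f$ on $X$. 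This should hold precisely because $S^{1}\times S^{3}$ has $b^{+}=0$, so that connect-summing the fiber with it does not kill the invariant; alternatively one may invoke known constructions of exotic diffeomorphisms of manifolds of the form $Z\#(S^{1}\times S^{3})$, as in \cite{KKN} and related work. Carrying out this survival-under-connected-sum argument is where essentially all the difficulty lies; the two localization steps above, which replace the appeals to Quinn--Perron and to Kreck in the simply-connected case, are comparatively soft.
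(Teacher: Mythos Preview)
Your setup coincides with the paper's: the same fiber $X=X_{0}\#(S^{1}\times S^{3})$ and the same $f=f_{0}\#\mathrm{id}$. The differences, and a genuine gap, lie in how you handle (2) and (1).

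\medskip
\textbf{Gap in (2).} Your fibration $\Homeo(X_{0}\ \mathrm{rel}\ B)\to\Homeo(X_{0})\to\Emb(B,X_{0})$ lives in the \emph{topological} category, since the isotopy of $f_{0}$ to the identity that you are trying to lift is only a topological one. Your assertion that the base ``is homotopy equivalent to a frame bundle'' and that the $w_{2}$ long-exact-sequence computation gives $\pi_{1}=0$ is the step that does not go through. In the smooth category the base is the $GL_{4}^{+}$-frame bundle and your $w_{2}$ argument works, but you cannot use that here because $f_{0}$ is not smoothly isotopic to the identity. In the topological category the relevant fiber is (up to the usual Kister-type identifications) $\mathrm{TOP}(4)$, and neither $\pi_{1}\mathrm{TOP}(4)$ nor the map $\pi_{2}(X_{0})\to\pi_{1}\mathrm{TOP}(4)$ is known in a form that lets your computation run; equivalently, the homotopy type of $\Emb^{\mathrm{top}}(D^{4},X_{0})$ in the needed range is not available. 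The paper circumvents this by proving a relative version of Quinn's isotopy theorem (\cref{relative-quinn}) via normal invariants and surgery theory, and then killing the residual boundary obstruction, which lives in $\pi_{1}\Homeo(S^{3})$, by the $\CP^{2}$ Dehn-twist trick (\cref{lem:homotopy}, \cref{cl:dehn}). That is where the non-spin hypothesis actually enters in the paper, and it is close in spirit to what you want, but it is not the bare frame-bundle computation you wrote.

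\medskip
\textbf{Different route for (1).} You propose to make Kreck's pseudo-isotopy the identity on $B\times[0,1]$ and then glue. This may well be feasible (straighten an arc, then a normal framing, using $w_{2}\neq0$ to kill the $\pi_{1}SO(4)$ obstruction), but it is not as soft as you suggest and it piggybacks on the same localization issue as in (2), now in the smooth category. The paper takes a completely different route: it works directly with the $5$-dimensional total spaces, shows via the \v{C}adek--Van\v{z}ura classification of rank-$5$ oriented vector bundles that $TM_{1}\cong TM_{2}$ (using homotopy invariance of $w_{i}$ and Novikov's topological invariance of $p_{1}$), and then invokes high-dimensional smoothing theory (Kirby--Siebenmann, concordance-implies-isotopy, $6$-connectivity of $PL/O$) to conclude $M_{1}\cong M_{2}$. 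This avoids any rel-$B$ bookkeeping entirely.

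\medskip
\textbf{On (3).} You correctly identify this as the crux. The paper's mechanism is concrete: refine Ruberman's one-parameter Seiberg--Witten invariant to a $\mu$-map version $\mathrm{SW}_{tot}(f,\Gamma)\colon\mathbb{A}(X)\to\Z$, so that the stabilization by $S^{1}\times S^{3}$ is absorbed by evaluating on suitable classes, and then combine Ruberman's computation with Ozsv\'ath--Szab\'o's higher-type adjunction result to get nonvanishing. Your ``survival under connect-sum with $b^{+}=0$'' heuristic is the right intuition, but you should expect to need exactly this kind of refinement rather than a direct gluing formula for the unrefined invariant.
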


\begin{proof}
Let $X'=2k\CP^{2}\#l(-\CP^{2})$, where $k \geq 2$ and $l \geq 10k+1$, as in the proof of \cref{main thm}.
Let $X = X' \# n(S^{1} \times S^{3})$ for $n>0$.
Let $f$ be the diffeomorphism of $X'$ discussed in the proof of \cref{main thm}, and extend $f$ to a diffeomorphism of $X$ by the identity.
First, we note that  \cref{cor10'}, proved in the next section,
implies that $f$ is topologically isotopic to the identity of $X$. The condition (2) follows from this.

Next we shall prove $f$ is not smoothly isotopic to the identity on $X$. The condition (3) follows from this.
In order to prove this,  we modify Ruberman's argument in  \cite{Rub2} slightly.
First, refine the Seiberg--Witten invariants of diffeomorphisms \cite[Definition2.1, 2.3]{Rub2} to those with $\mu$-map as follows.
Define $\mathrm{SW}(\Gamma,h)$ as a map from $\mathbb{A}(X)$ to  $\Z$ by
\[
\mathrm{SW}(\Gamma,h)(a)=\langle\mu(a),[\tilde{\mathcal{M}}(\Gamma;h)]\rangle,
\]
where $h$ is a $1$-parameter family of perturbations and $\tilde{\mathcal{M}}(\Gamma;h)$ is the $1$-parameter moduli space in \cite[\S2]{Rub2}.
(For $\mu$-map and $\mathbb{A}(X)$, see e.g. \cite{OSThom}.)
The corresponding total invariant $\mathrm{SW}_{tot}(f,\Gamma)$ is similarly defined.
These invariants have the same properties of Ruberman's original ones.
In particular, \cite[Theorem 3.2]{Rub2} holds for the refined one.
The non-triviality of $\mathrm{SW}_{tot}(f,\Gamma)$ for the above diffeomorphism $f$ follows from combining Ruberman's calculation \cite[\S4]{Rub2} with Proposition 2.2 of \cite{OShigher} by Ozsv\'{a}th--Szab\'{o}.
This and \cite[Theorem 3.2]{Rub2} imply that $f$ is  not smoothly isotopic to the identity.

We shall check (1) as follows.
Let $\{f_{t}\}_{t \in [0,1]}$ be a topological isotopy from the identity to $f$.
We have a homeomorphism $F : E_{1} \to E_{2}$ induced from the map $[0,1] \times X \to [0,1] \times X$ defined by $(t,x) \mapsto (t, f_{t}^{-1}(x))$.
Let $\scrS_{1}$ be the natural smooth manifold structure on $E_{1}$, and $\scrS_{2}$ be the smooth manifold structure on $E_{1}$ defined as the pull-back under $F$ of the natural smooth structure on the total space $E_{2}$.
Define a topological manifold $M$ to be the underlying topological manifold of $E_{1}$, and denote by $M_{j}$ the smooth manifold $(M, \scrS_{j})$ for $j=1,2$.

By a theorem due to \v{C}adek and Van\v{z}ura \cite[Theorem~1 (ii)]{CM},
the isomorphism class of an oriented vector bundle over $M$ of rank $5$ is determined by the characteristic classes $w_{2}, w_{4}, p_{1}$.
Here, to apply \cite[Theorem~1 (ii)]{CM} to $M$, we have used that $H^{4}(M;\Z)$ has no torsion and that $w_{2}(M) \neq 0$, which follows from that $X$ is non-spin.
(See Remark in page 755 of \cite{CM}.)
The coincidence of these characteristic classes $w_{2}, w_{4}, p_{1}$ for $TM_{1}$ and $TM_{2}$ follows from the facts that the Stiefel--Whitney classes are homotopy invariant (see, for example, page 193 of \cite{May}) and that the rational Pontryagin classes are topological invariant, which is known as the Novikov theorem~\cite{Nov}.
%
%
Therefore it follows from \cite[Theorem~1 (ii)]{CM} that $TM_{1}$ and $TM_{2}$ are isomorphic to each other.
This combined with the following lemma verifies the condition (1) in the assertion of the theorem.
\end{proof}

\begin{lem}
Let $E_{1}$ and $E_{2}$ be smooth manifolds
of the same dimension $5$ or $6$.
If there exists a homeomorphism $F : E_{1} \to E_{2}$ such that $F^{\ast}TE_{2}$ is isomorphic to $TE_{1}$, then $E_{1}$ and $E_{2}$  are mutually diffeomorphic.
\end{lem}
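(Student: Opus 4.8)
The plan is to invoke smoothing theory for topological manifolds in the stable dimension range, where the obstruction to smoothing a homeomorphism is controlled entirely by the stable tangent bundle. Since $E_1$ and $E_2$ have dimension $5$ or $6$, they are high-dimensional in the sense relevant to surgery and smoothing theory — in particular, dimension $\geq 5$ — so the machinery of Kirby--Siebenmann applies. The key point is that for a topological manifold of dimension $\geq 5$, the set of concordance classes of smooth structures is in bijection with homotopy classes of lifts of the classifying map $M \to B\mathrm{TOP}$ of the stable tangent microbundle to $B\mathrm{O}$, i.e.\ with $[M, \mathrm{TOP}/\mathrm{O}]$ once a smooth structure is fixed as a basepoint.

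**First I would** set up the situation precisely: give $E_1$ its smooth structure $\scrS_1$ and transport the smooth structure of $E_2$ through $F$ to get a second smooth structure $\scrS_2$ on the underlying topological manifold $M$, exactly as in the proof above. Then $E_1 \cong E_2$ smoothly if and only if $M_1 = (M,\scrS_1)$ and $M_2 = (M,\scrS_2)$ are diffeomorphic, which will follow if the two smooth structures are \emph{concordant} (a concordance being a smooth structure on $M \times [0,1]$ restricting correctly at the ends; that a concordance yields a diffeomorphism in dimension $\geq 5$ is the concordance-implies-isotopy-type consequence of Kirby--Siebenmann, using $s$-cobordism arguments — and here one should note $M$ need not be simply connected, so one really wants the statement that concordant smooth structures on a manifold of dimension $\geq 6$, or $\geq 5$ with the appropriate care, give diffeomorphic manifolds; the dimensions $5$ and $6$ are chosen to stay safely in range).

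**Then I would** identify the obstruction to concordance. Fixing $\scrS_1$ as basepoint, the smoothing $\scrS_2$ is classified by a homotopy class $\theta \colon M \to \mathrm{TOP}/\mathrm{O}$, and $\scrS_2$ is concordant to $\scrS_1$ precisely when $\theta$ is null-homotopic. Now $\mathrm{TOP}/\mathrm{O}$ is $6$-connected, with $\pi_7(\mathrm{TOP}/\mathrm{O}) = \Z/2$ the first nonzero group (indeed $\pi_i(\mathrm{TOP}/\mathrm{O}) = 0$ for $i \leq 6$). Since $\dim M = 5$ or $6 < 7$, obstruction theory forces every map $M \to \mathrm{TOP}/\mathrm{O}$ to be null-homotopic: the primary (and only potentially nonzero) obstruction lives in $H^{i}(M;\pi_i(\mathrm{TOP}/\mathrm{O}))$, which vanishes for $i \leq 6$. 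Hence $\scrS_1$ and $\scrS_2$ are concordant, so $M_1$ and $M_2$ are diffeomorphic, proving the lemma. In this argument the hypothesis $F^{\ast}TE_2 \cong TE_1$ is what guarantees $F$ is covered by a bundle map on stable tangent microbundles, which is the input needed to even define the relative smoothing obstruction — without it one would a priori land in $[M,B\mathrm{TOP}/\mathrm{O}$-style$]$ data including the tangential part.

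**The main obstacle** I anticipate is bookkeeping around the precise form of the smoothing-theory statements in low dimensions: whether one needs the tangential hypothesis to reduce to a $\mathrm{TOP}/\mathrm{O}$-lifting problem on the nose, and confirming that dimension $5$ (not just $\geq 6$) is safe for both the "obstruction vanishes" step and the "concordance implies diffeomorphism" step. Both are standard consequences of Kirby--Siebenmann (\emph{Foundational Essays on Topological Manifolds, Smoothings, and Triangulations}), but the dimension-$5$ case of concordance-implies-isotopy historically required extra care (Quinn's work), so I would cite this carefully; since the application only ever uses dimensions $5$ and $6$, and in fact the paper's $E_i$ are $5$-dimensional, it suffices to quote the theorem in exactly that range. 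Everything else — the high-connectivity of $\mathrm{TOP}/\mathrm{O}$ and the obstruction-theory count — is immediate once the setup is in place.
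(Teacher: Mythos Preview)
Your argument contains a genuine error: $\mathrm{TOP}/\mathrm{O}$ is \emph{not} $6$-connected. In fact $\pi_3(\mathrm{TOP}/\mathrm{O}) \cong \Z/2$, coming from the Kirby--Siebenmann obstruction; the fibration $\mathrm{PL}/\mathrm{O} \to \mathrm{TOP}/\mathrm{O} \to \mathrm{TOP}/\mathrm{PL}$ together with $\mathrm{TOP}/\mathrm{PL} \simeq K(\Z/2,3)$ and the $6$-connectivity of $\mathrm{PL}/\mathrm{O}$ shows $\pi_i(\mathrm{TOP}/\mathrm{O}) \cong \pi_i(\mathrm{TOP}/\mathrm{PL})$ for $i \le 6$. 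Consequently $[M,\mathrm{TOP}/\mathrm{O}] \cong H^3(M;\Z/2)$ for a $5$- or $6$-manifold $M$, and this is in general nonzero. So you cannot conclude that the difference class $\theta$ is null from the dimension count alone; the hypothesis $F^{\ast}TE_2 \cong TE_1$ must actually be used to kill $\theta$, not merely to ``define'' it (the difference class between two smoothings of a topological manifold exists without any tangent-bundle assumption).

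The paper's proof navigates exactly this point by factoring through PL. The isomorphism $TM_1 \cong TM_2$ of vector bundles gives homotopic lifts $M \to B\mathrm{PL}$ of the stable tangent microbundle $M \to B\mathrm{TOP}$, and in dimension $\ge 5$ this forces the induced PL structures to be concordant and hence, by ``concordance implies isotopy'', PL homeomorphic. \emph{Then} one invokes $6$-connectivity --- but of $\mathrm{PL}/\mathrm{O}$, not $\mathrm{TOP}/\mathrm{O}$ --- to say that a PL manifold of dimension $\le 6$ carries a unique smooth structure, yielding the diffeomorphism. Your one-step shortcut collapses this two-stage argument and in doing so confuses $\mathrm{PL}/\mathrm{O}$ with $\mathrm{TOP}/\mathrm{O}$; once you separate the two steps, you recover precisely the paper's proof.
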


\begin{proof}
Now we make use of some knowledge in high dimensional topology, mainly known as Kirby--Siebenmann theory~\cite{KS}.
As in the proof of \cref{main thm},
let $M$ denote the underlying topological manifold of $E_{1}$, $\scrS_{1}$ denote the smooth manifold structure on $E_{1}$, $\scrS_{2}$ denote the smooth manifold structure on $M$ defined as the pull-back under $F$ of the smooth structure on $E_{2}$, and
denote by $M_{j}$ the smooth manifold $(M, \scrS_{j})$ for $j=1,2$.
Let $\tau M : M \to BTOP$ be the classifying map of the stable tangent microbundle.
The two smooth structures $M_{j}$ on $M$ give rise to lifts, which we denote by $\tau M_{j} : M \to BPL$, of $\tau M$ along the natural map $BPL \to BTOP$.
Since we have seen that $TM_{1}$ and $TM_{2}$ are isomorphic to each other, we have that these lifts $\tau M_{j}$ are homotopic to each other. 
Since $M$ is of high dimension (i.e. $\geq 5$), the homotopy classes of lifts of $\tau M$ are naturally in a bijective correspondence with the concordance classes of PL structures on $M$.
(See, for example, \cite[1.7.2~Corollary]{Rud}.
Hence $M_{1}$ and $M_{2}$ are PL-concordant.
By the fact that ``concordance implies isotopy'' in high dimension for PL structures on a TOP-manifold
(see, for example, page 305 of \cite{Quinn2}), we have that $M_{1}$ and $M_{2}$ are PL isotopic, and in particular PL homeomorphic.
By the fact that $PL/O$ is $6$-connected and $\dim M_{1}=5$,
smooth structures on the PL-manifold $M_{1}$ are unique (see \cite[1.7.8 Remark]{Rud}).
Hence $M_{1}$ is diffeomorphic to $M_{2}$.
\end{proof}

\section{Topological mapping class groups}
Throughout this section, $M$ typically denotes a compact smooth/topological $4$-manifold with boundary $S^{3}$,
and $\hat{M}$ is a closed $4$-manifold which is obtained from $M$ by capping the boundary by $D^{4}$.
The following is a relative version of \cite[1.1~Theorem]{Quinn1}, considered for smooth $4$-manifolds.

\begin{theo}
\label{relative-quinn}
Let $N$ be a simply-connected oriented closed smooth $4$-manifold.
Let  $M$ be 
$N \sharp (\CP^2\setminus\mathrm{int}\, D^4)$ or $N \sharp (\overline{\CP^2}\setminus\mathrm{int}\, D^4)$.
The natural homomorphism
\[
\pi_0 (\Homeo(M, \partial M))  \to \Aut (H_2(M), \lambda)
\]
is an isomorphism, where $\lambda$ is the intersection form of $M$.
\end{theo}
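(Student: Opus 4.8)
The plan is to relate $\pi_0(\Homeo(M,\partial M))$ to the mapping class group of the \emph{closed} $4$-manifold $\hat M$ obtained by capping $\partial M=S^3$ with a disk $D\cong D^4$, so that $\hat M=N\sharp\CP^2$ or $N\sharp\overline{\CP^2}$, and then to combine the closed case with a fibration argument. Observe that $\hat M$ is closed, oriented, simply-connected, and \emph{non-spin}; in particular it contains an embedded $2$-sphere $\Sigma$, namely a $\CP^1$ inside the $\CP^2$-summand, with $\Sigma\cdot\Sigma=\pm1$ \emph{odd}. Capping off identifies $H_2(M;\Z)\cong H_2(\hat M;\Z)$ as lattices with intersection form $\lambda$, and extension by the identity over $D$ identifies $\Homeo(M,\partial M)$ with $\Homeo(\hat M,D):=\{\psi\in\Homeo(\hat M):\psi|_D=\mathrm{id}\}$, compatibly with the maps to the isometry groups of $\lambda$. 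For $\hat M$ closed we may quote $\pi_0(\Homeo^+(\hat M))\xrightarrow{\ \cong\ }\Aut(H_2(\hat M),\lambda)$, i.e.\ \cite[1.1~Theorem]{Quinn1} (together with Freedman's realization of isometries by homeomorphisms).

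Next I would consider the restriction map $r\colon\Homeo(\hat M)\to\Emb(D,\hat M)$ to the space of locally flat embeddings of a slightly shrunk copy of $D$ into $\mathrm{int}\,\hat M$. By the isotopy extension theorem in dimension $4$ and its parametrized form (which rest on Quinn's work; see \cite{Quinn2}), $r$ is a fibration onto the components it meets, with fiber over the inclusion equal to $\Homeo(\hat M,D)\cong\Homeo(M,\partial M)$. The resulting exact sequence
\[
\pi_1\Emb(D,\hat M)\xrightarrow{\ \partial\ }\pi_0(\Homeo(M,\partial M))\to\pi_0(\Homeo(\hat M))\to\pi_0\Emb(D,\hat M),
\]
together with the fact that $\pi_0\Emb(D,\hat M)$ only records orientation behavior, shows that the image of $\pi_0(\Homeo(M,\partial M))$ in $\pi_0(\Homeo(\hat M))$ is precisely $\pi_0(\Homeo^+(\hat M))$. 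Postcomposing with the isomorphism of the closed case, the natural homomorphism $\pi_0(\Homeo(M,\partial M))\to\Aut(H_2(M),\lambda)$ of the theorem is therefore \emph{surjective}, and its \emph{injectivity} is reduced, by exactness, to showing that $\partial$ vanishes; for this it suffices to prove $\pi_1\Emb(D,\hat M)=0$.

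To establish that vanishing I would use that $\Emb(D,\hat M)$ is, up to homotopy, the total space of the topological tangent frame bundle of $\hat M$, giving a fibration $\mathrm{TOP}(4)\to\Emb(D,\hat M)\to\hat M$; since $\pi_1\hat M=0$ this yields $\pi_1\Emb(D,\hat M)\cong\mathrm{coker}\bigl(\pi_2\hat M\to\pi_1\mathrm{TOP}(4)\bigr)$, where the connecting map sends a $2$-sphere $S\subset\hat M$ to the clutching function of $T\hat M|_S$, regarded in $\pi_1\SO(4)\to\pi_1\mathrm{TOP}(4)$. Evaluated on $S=\Sigma$, the self-intersection being odd forces $w_2(T\hat M|_\Sigma)\neq0$, so this clutching function is the generator of $\pi_1\SO(4)\cong\Z/2$. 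Since $\pi_1\SO(4)\to\pi_1\mathrm{TOP}(4)$ is surjective (a fact about the low-degree homotopy of $\mathrm{TOP}(4)$; cf.\ \cite{KS}), the map $\pi_2\hat M\to\pi_1\mathrm{TOP}(4)$ is onto, hence $\pi_1\Emb(D,\hat M)=0$, and the theorem follows.

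I expect the genuine difficulty to lie in this final step, on two counts. First, that $r$ is an honest (Serre) fibration — and more generally the parametrized isotopy-extension and local-contractibility statements for homeomorphisms of $4$-manifolds — is exactly the content of Quinn's hard work and must be invoked with care. Second, the homotopy type of $\mathrm{TOP}(4)$ is notoriously delicate, and the one input needed, surjectivity of $\pi_1\SO(4)\to\pi_1\mathrm{TOP}(4)$, must be pinned down; it is precisely here that the hypothesis $M=N\sharp(\CP^2\setminus\mathrm{int}\,D^4)$ — equivalently, that $\lambda$ is odd, so that an odd-self-intersection sphere exists — enters, since for spin $\hat M$ one would instead get $\pi_1\Emb(D,\hat M)\cong\Z/2$, and $\partial$ could carry its generator, the boundary Dehn twist along $S^3$, into the kernel, so that the theorem as stated could fail. (An alternative route, matching the description of the theorem as ``a relative version of \cite[1.1~Theorem]{Quinn1}'', is to rerun Quinn's proof relative to $\partial M$: a self-homeomorphism inducing the identity on $H_2$ is homotopic rel $\partial$ to the identity by obstruction theory, hence pseudoisotopic rel $\partial$ to the identity by $5$-dimensional topological surgery, hence isotopic rel $\partial$ to the identity by Quinn's pseudoisotopy theorem, with surjectivity supplied by capping off and Freedman's realization as above.)
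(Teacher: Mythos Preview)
Your fibration approach is attractive and genuinely different from the paper's, but it has a real gap precisely where you flag it: the surjectivity of $\pi_1\SO(4)\to\pi_1\mathrm{TOP}(4)$. This is \emph{not} in Kirby--Siebenmann (whose results concern the stable groups and dimensions $\geq 5$), and the low homotopy of $\mathrm{TOP}(4)$, equivalently of $\Homeo(S^4)$, is not known; the vanishing of $\pi_1(\mathrm{TOP}(4)/O(4))$ is, as far as I am aware, open. Without it you cannot conclude $\pi_1\Emb(D,\hat M)=0$: your odd sphere $\Sigma$ only shows that the image of the generator of $\pi_1\SO(4)$ in $\pi_1\mathrm{TOP}(4)$ dies in $\pi_1\Emb(D,\hat M)$, not that the whole cokernel vanishes. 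So the main route, as written, does not close.

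The paper follows essentially the ``alternative route'' you sketch at the end, but makes precise two points your sketch elides. First, ``homotopic rel $\partial$ to the identity by obstruction theory'' is not what is established directly: one first shows, via the normal invariant (\cref{cap} and \cref{quinn2.1}, reducing to Quinn's closed Proposition~2.1), that $f$ is homotopic to the identity through maps preserving $\partial M$ \emph{setwise} and restricting to homeomorphisms of $\partial M$. Upgrading this to a homotopy \emph{rel} $\partial M$ is a separate step (\cref{lem:homotopy}), and it is exactly here that the $\CP^2$-summand hypothesis enters: the path of boundary homeomorphisms may represent the nontrivial class in $\pi_1\Homeo(S^3)\cong\Z/2$, and one invokes the $S^1$-action on $\CP^2$ (Giansiracusa's argument, \cref{cl:dehn}) to show that the resulting Dehn twist on a collar of $\partial M$ is isotopic to the identity rel $\partial M$. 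Only then do Quinn's relative results (\cref{Quinn2.2}, \cref{Quinn1.4}) yield isotopy rel $\partial M$. Surjectivity is obtained from Boyer~\cite{Boyer}, which handles the boundary case directly.

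Morally the two arguments use the $\CP^2$-summand for the same purpose: in your language, the boundary Dehn twist \emph{is} the image under $\partial$ of the class coming from $\pi_1\SO(4)$, and your odd sphere kills that class in $\pi_1\Emb(D,\hat M)$. The difference is that the paper's argument stays with $\pi_1\Homeo(S^3)$, which is completely understood, and never needs any unproven input about $\pi_1\mathrm{TOP}(4)$; that is why it goes through and yours, as stated, does not.
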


Before proving \cref{relative-quinn}, we note the following corollary which is used in the proof of \cref{main thm2}.
%
%

\begin{cor}\label{cor10'}
Let $\hat{M}$ be a topological $4$-manifold of the form 
$N \sharp \CP^2$ or $N \sharp \overline{\CP^2}$ for $N$
as in \cref{relative-quinn}.
Let $f:\hat{M} \to \hat{M}$ be a homeomorphism which is the identity on a $4$-ball in $\hat{M}$ and induces the identity on the intersection form of $\hat{M}$.
For any topological $4$-manifold $L$,
let $\tilde{f}: \hat{M} \sharp L \to \hat{M} \sharp L$ be the connected sum of $f$ with the identity
on $L$, extended along the fixed ball of $f$.
 Then
$\tilde{f}$  is topologically isotopic to the identity.
\end{cor}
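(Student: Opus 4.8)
The plan is to localize $\tilde f$ inside a copy of $\hat M$ away from the summand $L$, and then invoke the injectivity part of \cref{relative-quinn}. Write $B \subset \hat M$ for the $4$-ball on which $f$ restricts to the identity, let $B_0 \subset \mathrm{int}\, B$ be the ball removed to form the connected sum $\hat M \# L$, and set $M := \hat M \setminus \mathrm{int}\, B$; this is a compact topological $4$-manifold with $\partial M \cong S^3$. Since $\hat M$ is $N \# \CP^2$ (resp. $N \# \overline{\CP^2}$) and locally flat embedded $4$-balls in a connected topological $4$-manifold are unique up to ambient isotopy --- a standard consequence of the topological isotopy extension theorem and the $4$-dimensional annulus theorem \cite{Quinn1} --- and since every orientation-preserving self-homeomorphism of $S^3$ is isotopic to the identity, one may identify $(M, \partial M)$ rel boundary with $(N \# (\CP^2 \setminus \mathrm{int}\, D^4), S^3)$ (resp. $(N \# (\overline{\CP^2} \setminus \mathrm{int}\, D^4), S^3)$), so that \cref{relative-quinn} applies to $M$.

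First I would note that $f$ restricts to a homeomorphism $f_0 := f|_M$ that is the identity on $\partial M$, indeed on a neighbourhood of it, because $f$ is the identity on all of $B$; hence $f_0$ defines a class in $\pi_0(\Homeo(M, \partial M))$. As removing an open $4$-ball from the closed manifold $\hat M$ leaves $H_2$ unchanged, the inclusion $M \hookrightarrow \hat M$ induces an isomorphism $H_2(M;\Z) \cong H_2(\hat M;\Z)$ carrying $\lambda$ to $\lambda$ and intertwining the actions of $f_0$ and $f$; since $f$ acts as the identity on $(H_2(\hat M), \lambda)$, so does $f_0$ on $(H_2(M), \lambda)$. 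By the injectivity in \cref{relative-quinn}, $f_0$ is topologically isotopic rel $\partial M$ to $\mathrm{id}_M$.

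It remains to promote this to an isotopy on $\hat M \# L$. By construction $\hat M \# L = M \cup_{S^3} W$, where $W = (B \setminus \mathrm{int}\, B_0) \cup_{S^3} (L \setminus \mathrm{int}\, D^4)$, and $\tilde f$ satisfies $\tilde f|_M = f_0$ and $\tilde f|_W = \mathrm{id}_W$: the latter because $\tilde f$ equals $f = \mathrm{id}$ on $B \setminus \mathrm{int}\, B_0 \subset B$ and equals $\mathrm{id}$ on $L \setminus \mathrm{int}\, D^4$ by definition. Concatenating the rel-$\partial M$ isotopy from $f_0$ to $\mathrm{id}_M$ with the constant isotopy on $W$ then yields a topological isotopy from $\tilde f$ to $\mathrm{id}_{\hat M \# L}$, proving the corollary.

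I expect the only delicate point to be the rel-boundary identification of $(M, \partial M)$ with a pair of the form appearing in \cref{relative-quinn}: although uniqueness of locally flat balls, and the fact that they can be straightened while controlling the induced homeomorphism of the boundary sphere, are routine within the standard machinery of topological $4$-manifold topology, this is precisely where the argument must be written carefully, since an a priori arbitrary homeomorphism of $S^3$ would not let us apply \cref{relative-quinn} verbatim.
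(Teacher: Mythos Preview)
Your argument is correct and is exactly the intended derivation: the paper states \cref{cor10'} as an immediate corollary of \cref{relative-quinn} and gives no separate proof, so your write-up simply fills in the expected details---restrict $f$ to $M=\hat M\setminus\mathrm{int}\,B$, apply the injectivity of \cref{relative-quinn} to get a rel-$\partial M$ isotopy to the identity, and extend by the constant isotopy over the rest of $\hat M\# L$.

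One remark on the point you flag as delicate. You do not need a rel-boundary identification of $(M,\partial M)$ with $(N\#(\CP^2\setminus\mathrm{int}\,D^4),S^3)$. The conclusion of \cref{relative-quinn} is a statement about $\pi_0(\Homeo(M,\partial M))$, which is a homeomorphism invariant of the pair; hence \emph{any} homeomorphism $\phi\colon \hat M\setminus\mathrm{int}\,B \to N\#(\CP^2\setminus\mathrm{int}\,D^4)$ (whose existence follows from uniqueness of locally flat $4$-balls via the $4$-dimensional annulus theorem) transports the conclusion, regardless of what $\phi$ does on the boundary $S^3$. So the caveat in your final paragraph can be dropped.
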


The proof of Theorem \ref{relative-quinn} proceeds as follows.
Let $f: (M , \partial{M})
\to (M,\partial{M})$ be a self-homotopy equivalence of a simply-connected topological $4$-manifold with connected boundary.
 Note that if $f$ preserves the orientation, 
 then the restriction $f|\partial M$ on the boundary $\partial M=S^3$
 is homotopic to the identity by the Hopf degree theorem. 
 Therefore we can assume that the restriction $f|\partial M$  is the identity.

  First we recall the following.
  For a homotopy equivalence $f: (M , \partial{M})\to (M, \partial{M})$, its {\it normal invariant} $n(f)$ is defined   (see e.g. \cite[\S9.4]{Ra} or \cite{FQ}), which takes value in   
  \[
  [(M, \partial M) ; (G/Top, *)] \cong H^2(M,\partial M;\Z_2)\oplus H^4(M,\partial M;\Z).
  \]
 \begin{lem}\label{cap}
 Let $M$ be a simply-connected topological $4$-manifold with $\partial M=S^3$.
 If $f: (M , \partial{M})\to (M, \partial{M})$ is a homeomorphism preserving the boundary, then
$n(f)$ is the class of the constant map in $[(M, \partial M) ; (G/Top, *)] $.
\end{lem}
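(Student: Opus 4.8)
The plan is to deduce the vanishing of $n(f)$ from the formal properties of the normal invariant, together with the observation that a homeomorphism represents the base point of the topological structure set. Recall that for a homotopy equivalence of pairs $h\colon(M',\partial M')\to(M,\partial M)$ restricting to a homeomorphism on the boundary, the normal invariant $n(h)\in[(M,\partial M);(G/Top,*)]$ depends only on the class $[h]$ in the topological structure set rel boundary $\mathcal{S}^{\mathrm{Top}}(M,\partial M)$, and the assignment $[h]\mapsto n(h)$ is the base-point preserving normal-invariant map appearing in the Sullivan--Wall surgery sequence (see \cite[\S9.4]{Ra}, and \cite{FQ} for the $4$-dimensional case, where Freedman--Quinn topological surgery applies since $M$ is simply connected). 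In particular it carries the base point $[\mathrm{id}_M]$ to the class of the constant map.

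Next I would observe that $f$ itself represents the base point of $\mathcal{S}^{\mathrm{Top}}(M,\partial M)$. Indeed $f$ is a homotopy equivalence of $(M,\partial M)$ that is the identity --- hence a homeomorphism --- on $\partial M$, so it defines a class $[f]\in\mathcal{S}^{\mathrm{Top}}(M,\partial M)$; and since two structures $(M_0,h_0)$ and $(M_1,h_1)$ represent the same class whenever there is a homeomorphism $\phi\colon M_0\to M_1$ (rel $\partial$) with $h_1\circ\phi\simeq h_0$ (rel $\partial$), applying this with $h_0=f$, $h_1=\mathrm{id}_M$ and $\phi=f$ gives $[f]=[\mathrm{id}_M]$. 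Hence $n(f)=n(\mathrm{id}_M)$ is the class of the constant map, which is the assertion.

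For concreteness, the geometric content is that $n(f)$ is the difference of the two $Top$-reductions of the Spivak normal fibration of $M$ furnished by the stable topological normal bundle $\nu_M$ and by the pushforward of $\nu_{M'}$ along a homotopy inverse of $f$; when $f$ is a homeomorphism these reductions agree canonically, since the topological tangent microbundle --- and hence the stable topological normal bundle --- is natural with respect to homeomorphisms, so the difference class is trivial. I do not expect a genuine obstacle: the only point demanding care is bookkeeping, namely checking that the definition of $n(f)$ used in \cite{Ra} and \cite{FQ} is the one factoring through $\mathcal{S}^{\mathrm{Top}}(M,\partial M)$ as a pointed map, and that this relative structure set is legitimately available in dimension $4$ under the simple-connectivity hypothesis --- both of which are standard.
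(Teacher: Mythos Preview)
Your argument is correct and rests on the same core fact as the paper's proof: the normal invariant of a self-homeomorphism is trivial because the topological normal (micro)bundle is natural under homeomorphisms. The packaging differs slightly. The paper proceeds by capping off: it extends $f$ over the closed manifold $\hat{M}=M\cup_{\partial M}D^{4}$ to a homeomorphism $\hat{f}$, invokes the closed-manifold statement that $n(\hat{f})$ is constant in $[\hat{M};G/\Top]$ (citing \cite[Proposition~9.48]{Ra}), and then pulls back to the relative group. You instead argue directly in the relative setting via $\mathcal{S}^{\Top}(M,\partial M)$, observing that $[f]=[\mathrm{id}_M]$ there and that $n$ is pointed on the structure set. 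Your route avoids the capping step but needs the relative surgery formalism (or at least the well-definedness of $n$ on structures rel boundary); the paper's route is shorter because it reduces to a single citation in the closed case. Both are standard and neither requires the full strength of 4-dimensional surgery---the vanishing of $n$ for a homeomorphism is really a statement about normal bundles, as your final paragraph makes explicit.
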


\begin{proof}
Let $\hat{M} = M \cup_{\partial M} D^4$ be the closed 4-manifold obtained from $M$ by capping the boundary by $D^{4}$.
The homeomorphism $f$ extends to a self-homeomorphism
$\hat{f}: \hat{M} \to \hat{M}$ on the closed 4-manifold.
Note that the normal invariant of $\hat{f}$ is constant in
$[\hat{M}, G/Top]$
(see \cite[Proposition~9.48]{Ra}).
Now the conclusion is deduced from the case of $\hat{f}$.
\end{proof}

For a simply-connected compact topological $4$-manifold $M$ with $\partial M=S^3$, let $\widetilde{HE}_{id}(M,\partial M)$ be the set of self-homotopy equivalences  $f: (M , \partial{M}) \to (M, \partial{M})$ which induce the identity on homology and whose restrictions $f : \partial{M} \to \partial{M}$ are self-homeomorphisms.
For $f,g \in \widetilde{HE}_{id}(M,\partial M)$, we define $f \sim g$ if there exists a homotopy between $f$ and $g$ in the space of continuous self-maps of $(M, \partial{M})$ which are self-homeomorphisms on $\partial{M}$.
Evidently $\sim$ is an equivalence relation on $\widetilde{HE}_{id}(M,\partial M)$, and
we define $HE_{id}(M,\partial M) = \widetilde{HE}_{id}(M,\partial M)/\sim$.


\begin{prop}\label{quinn2.1}
Let $M$ be a simply-connected smooth $4$-manifold with $\partial M=S^3$.
Then the normal invariant
\begin{align*}
n\colon HE_{id}(M,\partial M)
 \longrightarrow  \ [(M, \partial M) ; (G/Top, *)] 
\end{align*}
is an injection.
\end{prop}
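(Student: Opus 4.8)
The plan is to read off injectivity of $n$ from the topological surgery exact sequence of the pair $(M,\partial M)$, followed by a homotopy-theoretic argument, along the lines of Quinn's proof in the closed case. Since $\pi_1(M)=1$ is a good fundamental group, topological surgery and the surgery exact sequence are available in dimension $4$ by Freedman--Quinn, and the relative topological structure set fits into
\[
L_5(\Z)\ \longrightarrow\ \mathcal{S}^{TOP}(M,\partial M)\ \xrightarrow{\ \eta\ }\ [(M,\partial M);(G/Top,*)]\ \longrightarrow\ L_4(\Z),
\]
with $\eta$ the normal invariant. First I would check that sending $f\in\widetilde{HE}_{id}(M,\partial M)$ to the homotopy equivalence of pairs $f\colon (M,\partial M)\to(M,\partial M)$ defines a class in $\mathcal{S}^{TOP}(M,\partial M)$ depending only on the $\sim$-class of $f$, and that $\eta$ of this class is the normal invariant $n(f)$ of the statement --- this is just the definition of the normal invariant of a homotopy equivalence (cf.\ \cite[\S9.4]{Ra}, \cite{FQ}). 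Because $L_5(\Z)=0$, the map $\eta$ is injective, so it suffices to show that the resulting map $HE_{id}(M,\partial M)\to\mathcal{S}^{TOP}(M,\partial M)$ is injective.

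Suppose then that $f,g\in\widetilde{HE}_{id}(M,\partial M)$ have the same image in $\mathcal{S}^{TOP}(M,\partial M)$. Unwinding the equivalence relation in the structure set yields a self-homeomorphism $\phi$ of $(M,\partial M)$ (a homeomorphism on $\partial M$) and a homotopy from $g\circ\phi$ to $f$ through self-maps of $(M,\partial M)$ that are homeomorphisms on $\partial M$; thus $f\sim g\circ\phi$, and since $f,g$ act as the identity on homology, so does $\phi$ (which, being a homeomorphism, has vanishing normal invariant by \cref{cap}, as it must). So the proposition reduces to the assertion that a self-homeomorphism $\phi$ of $(M,\partial M)$ inducing the identity on homology satisfies $\phi\sim\mathrm{id}$. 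Using the Hopf degree theorem and isotopy extension on a collar of $\partial M=S^3$, together with uniqueness of topological collars, I would first arrange that $\phi$ is the identity near $\partial M$. Since $M$ is simply connected with boundary $S^3$, it is obtained by removing an open ball from a closed simply connected $4$-manifold, which we denote $\hat M=M\cup_{\partial M}D^4$; hence $M\simeq\bigvee_b S^2$, and as $\phi$ induces the identity on $H_2(M)=\pi_2(\bigvee_b S^2)$, it is homotopic to the identity as a bare self-map of $M$.

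The last and genuinely delicate step is to promote this free homotopy to one through self-maps of the pair that are homeomorphisms on $\partial M$. A relative CW structure on $(M,\partial M)$ has cells only in dimensions $2$ and $4$, so the only obstruction past the $2$-skeleton to a homotopy $\phi\simeq\mathrm{id}$ rel $\partial M$ lies in $H^4(M,\partial M;\pi_4(M))\cong\pi_4(M)$ (there is no degree-$3$ obstruction, since $H^3(M,\partial M;\pi_3(M))\cong H_1(M;\pi_3(M))=0$); this class has to be removed using the extra freedom allowed in the definition of $\sim$, namely letting $\partial M\cong S^3$ move by an isotopy --- equivalently, adjusting $\phi$ near $\partial M$ by a twist supported in a collar. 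An efficient way to organize this is to cap $\partial M$ with $D^4$, obtaining a self-homeomorphism $\hat\phi$ of the closed simply connected $4$-manifold $\hat M$ that is the identity on a ball and acts trivially on $H_2(\hat M)$, hence is homotopic to the identity on $\hat M$; since $\pi_1(\hat M)=1$ this homotopy can be taken constant on that ball, and restricting to $M$ and absorbing the residual boundary motion into an isotopy of $S^3$ gives $\phi\sim\mathrm{id}$, hence $f\sim g$. I expect the main obstacle to be precisely this step --- keeping the homotopy compatible with the pair and with the homeomorphism condition on $\partial M$, i.e.\ controlling the single $\pi_4$-obstruction --- which is the relative incarnation of the homotopy-theoretic heart of the Quinn--Perron argument.
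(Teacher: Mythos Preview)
Your reduction via the surgery exact sequence is a legitimate alternative to the paper's approach. The paper does not invoke the structure set or $L_5(\Z)=0$; instead it shows directly that the cap-off map
\[
HE_{id}(M,\partial M)\ \longrightarrow\ HE_{id}(\hat M,*)
\]
is injective (this is their \cref{global}), and then quotes Quinn's closed-case result that $n\colon HE_{id}(\hat M)\to[\hat M,G/\mathrm{Top}]$ is injective, together with $\pi_1(\hat M)=\pi_1(G/\mathrm{Top})=1$ to pass between based and unbased. Your route instead reduces first to the statement that every self-homeomorphism $\phi$ of $(M,\partial M)$ acting trivially on homology satisfies $\phi\sim\mathrm{id}$ (which in the paper is the \emph{corollary} of the proposition, \cref{cor:quinn2.1}), and then proposes to prove that by capping off. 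Either packaging is fine; both converge on the same technical core.

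The genuine gap is in your last paragraph. After capping, you assert that because $\pi_1(\hat M)=1$ the homotopy $\hat\phi\simeq\mathrm{id}$ ``can be taken constant on that ball'', and that the rest is absorbed into an isotopy of $S^3$. Simple connectivity buys you only a \emph{basepoint}-preserving homotopy; it does not force the homotopy to preserve $D^4$ setwise, to send $\partial D^4$ into $\partial D^4$, or to restrict to homeomorphisms of $\partial D^4$ at each stage---and that last condition is precisely what the equivalence $\sim$ demands. Producing such a homotopy is exactly the content of the paper's \cref{global} and its supporting lemmas (\cref{lem4.5}, \cref{lem4.4-2}, \cref{last lem}): one first makes the homotopy basepoint-preserving, then shrinks $D^4$ so its image stays in a chart, then uses a degree argument and a radial rescaling to force the homotopy to respect the triple $(\hat M,M,D^4)$ with homeomorphisms on $\partial D^4$. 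Your $\pi_4$-obstruction discussion correctly locates where the difficulty sits, but the sketch you give does not discharge it; the explicit constructions in \cref{Proof of Proposition 3.4} are doing real work that your argument omits.
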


 \begin{proof}
 This is proven in \cref{Proof of Proposition 3.4}.
 \end{proof}
  
  \cref{cap} and \cref{quinn2.1} imply the following corollary.
 
\begin{cor}\label{cor:quinn2.1}
Let $M$ be a simply-connected smooth $4$-manifold with connected boundary.
Then any self-homeomorphism $f$ of $M$ inducing the identity on homology is homotopic to the identity through a homotopy $\{f_{t}\}_{t \in [0,1]}$ which preserves boundary setwise and $f_{t}|\partial M$ are homeomorphisms of $\partial M$ for all $t$.
\end{cor}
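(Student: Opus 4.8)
The plan is to read off \cref{cor:quinn2.1} from \cref{cap} and \cref{quinn2.1} once $f$ has been placed inside the set $\widetilde{HE}_{id}(M,\partial M)$. Throughout I take $\partial M=S^{3}$, as in \cref{cap} and \cref{quinn2.1}; this is the case used in the proof of \cref{main thm2}. First I would observe that $f$ genuinely represents a class in $HE_{id}(M,\partial M)$: being a self-homeomorphism of the manifold $M$, the map $f$ carries $\partial M$ onto $\partial M$ and restricts there to a self-homeomorphism of $\partial M$, and it is in particular a self-homotopy equivalence of the pair $(M,\partial M)$; by hypothesis it induces the identity on homology. Hence $f\in\widetilde{HE}_{id}(M,\partial M)$, and so does $\mathrm{id}_{M}$ trivially, so both determine classes $[f],[\mathrm{id}_{M}]\in HE_{id}(M,\partial M)$.

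Next, since $f$ (respectively $\mathrm{id}_{M}$) is a boundary-preserving homeomorphism, \cref{cap} gives that its normal invariant is the class of the constant map in $[(M,\partial M);(G/Top,*)]$; in particular $n(f)=n(\mathrm{id}_{M})$. By \cref{quinn2.1} the normal invariant $n\colon HE_{id}(M,\partial M)\to[(M,\partial M);(G/Top,*)]$ is injective, so $[f]=[\mathrm{id}_{M}]$ in $HE_{id}(M,\partial M)$.

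It then remains to unwind the definition $HE_{id}(M,\partial M)=\widetilde{HE}_{id}(M,\partial M)/\sim$, where $f\sim g$ precisely when there is a homotopy from $f$ to $g$ through continuous self-maps of the pair $(M,\partial M)$ whose restrictions to $\partial M$ are self-homeomorphisms. The equality $[f]=[\mathrm{id}_{M}]$ therefore supplies a homotopy $\{f_{t}\}_{t\in[0,1]}$ with $f_{0}=\mathrm{id}_{M}$ and $f_{1}=f$, each $f_{t}$ a continuous self-map of $(M,\partial M)$ and each $f_{t}|_{\partial M}$ a self-homeomorphism of $\partial M$. Since each $f_{t}$ maps $\partial M$ into $\partial M$ and restricts there to a homeomorphism, it maps $\partial M$ onto $\partial M$, so the homotopy preserves the boundary setwise; this is exactly the conclusion of \cref{cor:quinn2.1}.

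The real content sits in the two inputs, not in this deduction: \cref{cap} (vanishing of the normal invariant of a homeomorphism), which relies on capping $\partial M=S^{3}$ off by $D^{4}$ and on closed-manifold surgery theory, and \cref{quinn2.1} (injectivity of the normal invariant on $HE_{id}$), a relative analogue of Quinn's injectivity result proven in \cref{Proof of Proposition 3.4}. Within the corollary itself the only thing to be careful about is the bookkeeping that certifies $f\in\widetilde{HE}_{id}(M,\partial M)$ and the observation that the equivalence relation defining $HE_{id}$ is set up exactly so that ``$[f]=[\mathrm{id}_{M}]$'' unpacks into the asserted boundary-preserving homotopy; so I do not anticipate a genuine obstacle at this stage. (If one wanted \cref{cor:quinn2.1} for a general connected boundary $\partial M$ rather than $S^{3}$, one would need the corresponding generalizations of \cref{cap} and \cref{quinn2.1}.)
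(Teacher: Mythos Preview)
Your proposal is correct and follows exactly the paper's approach: the paper simply states that \cref{cap} and \cref{quinn2.1} imply \cref{cor:quinn2.1}, and your argument spells out precisely this deduction. Your remark that the hypothesis $\partial M=S^{3}$ is implicitly in force (since that is what \cref{cap} and \cref{quinn2.1} assume) is also on point.
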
 
  
Here we need to use some results due to Quinn~\cite{Quinn1}:
  
\begin{prop}[{\cite[2.2 Proposition]{Quinn1}}]
\label{Quinn2.2}
Let $M$ be a simply-connected topological $4$-manifold with connected boundary.
Let $f_{0}, f_{1}$ are self-homeomorphisms of $M$ such that $f_{0}|\partial M=f_{1}|\partial M$.
If $f_{0}, f_{1}$ are homotopic rel $\partial M$, then they are pseudo-isotopic through a pseudo-isotopy which is the identity on the boundary.
\end{prop}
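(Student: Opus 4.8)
The plan is to follow Quinn's proof of \cite[2.2 Proposition]{Quinn1}. First I would reduce to the case $f_0=\mathrm{id}_M$. Put $\varphi:=f_0|\partial M=f_1|\partial M$ and $g:=f_1\circ f_0^{-1}$; then $g|\partial M=\mathrm{id}_{\partial M}$, and composing the given homotopy $\{f_t\}$ on the right with $f_0^{-1}$ produces a homotopy from $\mathrm{id}_M$ to $g$ that is stationary at the identity on $\partial M$, so $g$ is homotopic to $\mathrm{id}_M$ rel $\partial M$. Conversely, given a pseudo-isotopy $\Psi\colon M\times I\to M\times I$ from $\mathrm{id}_M$ to $g$ that is the identity on $\partial M\times I$, the composite $\Psi\circ(f_0\times\mathrm{id}_I)$ is a pseudo-isotopy from $f_0$ to $f_1$ whose restriction to $\partial M\times I$ is $\varphi\times\mathrm{id}_I$ — literally the identity when $\varphi=\mathrm{id}$, which is the relevant case. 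Hence it is enough to prove: a self-homeomorphism $g$ of $M$ with $g|\partial M=\mathrm{id}$ which is homotopic to $\mathrm{id}_M$ rel $\partial M$ is pseudo-isotopic to $\mathrm{id}_M$ rel $\partial M$.

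Next I would convert this into a triviality statement in a relative topological structure set. Choose a homotopy $H\colon M\times I\to M$ with $H_0=\mathrm{id}$, $H_1=g$, stationary on $\partial M$, set $W:=M\times I$, and form $\widehat H:=(H,\mathrm{pr}_I)\colon W\to W$. Then $\widehat H$ is a homotopy equivalence of pairs $(W,\partial W)\to(W,\partial W)$ — it is freely homotopic to $\mathrm{id}_W$ by rescaling the $I$-coordinate of $H$ — and on $\partial W=(M\times\{0\})\cup(\partial M\times I)\cup(M\times\{1\})$ it restricts to a homeomorphism, namely $\mathrm{id}$ on the first two faces and $g$ on the last. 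So $\widehat H$ represents a class $[\widehat H]\in\mathcal{S}^{\mathrm{TOP}}(W,\partial W)$, and $[\widehat H]$ being the base point means exactly that $\widehat H$ is homotopic rel $\partial W$ to a homeomorphism $\Psi$ of $W$ — which, from its prescribed restriction to $\partial W$, is precisely a pseudo-isotopy from $\mathrm{id}_M$ to $g$ that is the identity on $\partial M\times I$. It therefore remains to see that $\mathcal{S}^{\mathrm{TOP}}(M\times I,\partial(M\times I))$ is trivial; but this structure set classifies topological $h$-cobordisms on $M$ rel $\partial M$, and it is a single point by the $5$-dimensional topological $s$-cobordism theorem, which applies because $\dim W=5$, $\mathrm{Wh}(\pi_1 M)=\mathrm{Wh}(1)=0$, and $\pi_1 M$ is good — so the Freedman--Quinn machinery \cite{FQ} is in force. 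Unwinding the first step then gives the asserted pseudo-isotopy.

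The step I expect to be the main obstacle is exactly this appeal to the Freedman--Quinn regime: the cross-section of $W=M\times I$ is the $4$-manifold $M$, so the $s$-cobordism theorem invoked above holds only in the topological category and only because $M$ is simply connected (hence of good fundamental group). This is the precise point where simple connectivity is indispensable, and it is why the non-simply-connected smooth statement \cref{main thm2} cannot be proved by imitating \cref{main thm} and instead rests on the Seiberg--Witten input used there. A subsidiary matter requiring care is the rel-$\partial$ bookkeeping — one must hold the faces $M\times\{0\}$ and $M\times\{1\}$ and the collar $\partial M\times I$ fixed throughout — since it is this that promotes the product structure given by the $s$-cobordism theorem to an honest pseudo-isotopy rel $\partial M$ rather than merely a homeomorphism of $W$ in the standard concordance class.
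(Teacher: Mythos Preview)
The paper does not give its own proof of this proposition: it is quoted verbatim from Quinn \cite[2.2~Proposition]{Quinn1} and used as a black box in the proof of \cref{relative-quinn}. So there is nothing in the paper to compare your argument against.

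That said, your reconstruction follows the correct surgery-theoretic template and is close to Quinn's actual argument, but one step is not justified as written. You assert that $\mathcal{S}^{\mathrm{TOP}}(M\times I,\partial(M\times I))$ ``classifies topological $h$-cobordisms on $M$ rel $\partial M$'' and is therefore a single point by the $5$-dimensional $s$-cobordism theorem. This conflates two different objects. An element of the rel-$\partial$ structure set is a pair $(N,f)$ with $f\colon N\to M\times I$ a homotopy equivalence that is a homeomorphism on the boundary; the $s$-cobordism theorem (via Freedman--Quinn, using that $\pi_1=1$ is good) tells you only that the \emph{domain} $N$ is homeomorphic to $M\times I$ rel the bottom and sides. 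It does not by itself show that the homotopy equivalence $f$ is homotopic rel $\partial$ to a homeomorphism, which is what you need to conclude $[\widehat H]$ is the basepoint. The structure set can in principle contain several elements all with domain $M\times I$, distinguished by their normal invariants in $[(M\times I)/\partial(M\times I),G/\mathrm{TOP}]$.

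What actually closes the argument is the surgery exact sequence: since $L_5(\Z)=L_6(\Z)=0$, the normal-invariant map $\mathcal{S}^{\mathrm{TOP}}(W,\partial W)\to[W/\partial W,G/\mathrm{TOP}]$ is a bijection, and one then checks that the rel-$\partial$ normal invariant of $\widehat H$ vanishes (it is a self-map which is a homeomorphism on $\partial W$ and is freely homotopic to the identity). This is the content of Quinn's proof. Your invocation of the $s$-cobordism theorem is a necessary ingredient (it is what makes the $5$-dimensional topological surgery sequence exact at $\mathcal S$), but it is not the whole story; you should replace the sentence about $h$-cobordisms with the $L$-group computation and the normal-invariant vanishing.
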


\begin{theo}[{\cite[1.4 Theorem]{Quinn1}}]
\label{Quinn1.4}
Let $M$ be a simply-connected topological $4$-manifold with boundary.
Then a pseudo-isotopy of $M$ which is the identity on the boundary of $M$ is topologically isotopic rel boundary to a topological isotopy.
\end{theo}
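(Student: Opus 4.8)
The plan is to prove this as the $4$-dimensional topological analogue of Cerf's theorem that pseudo-isotopy implies isotopy for simply-connected manifolds: run Cerf's one-parameter Morse-theory argument on the $5$-dimensional product $W := M\times I$, but perform the middle-dimensional handle cancellations in the topological category, exploiting that the relevant $4$-dimensional level sets and the complements of the embedded spheres appearing in them are simply-connected, so that Freedman's topological disk embedding theorem is available.

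First I would reformulate. A pseudo-isotopy $F$ of $M$ which is the identity on $\partial M$ is a homeomorphism of $W = M\times I$ restricting to the identity on $M\times\{0\}\cup(\partial M\times I)$, with $f := F|_{M\times\{1\}}$. Being topologically isotopic rel boundary to an isotopy means $F$ can be joined, through homeomorphisms of $W$ that are all the identity on $M\times\{0\}\cup(\partial M\times I)$, to a level-preserving homeomorphism $(x,t)\mapsto(g_t(x),t)$. Equivalently, compare the projection $p := \mathrm{pr}_I\colon W\to I$ with $q := p\circ F^{-1}$: both are nonsingular locally trivial functions $W\to I$ that agree near $M\times\{0\}\cup(\partial M\times I)$, and the goal is a path of nonsingular functions from $q$ to $p$ that is fixed near that part of $\partial W$ and controlled near $M\times\{1\}$. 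This turns the statement into a parametrized topological Morse-theory problem on $W$.

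Next I would run the (topological) Cerf program. After a generic perturbation, using topological handle-straightening in dimension $5$, the path of functions has only nondegenerate birth--death points and handle slides; as in Cerf's argument one first cancels the extreme-index critical points and then eliminates the rest in consecutive index pairs. A cancellation between a handle of index $k$ and one of index $k+1$ is governed by the intersection of an attaching sphere with a belt sphere in a $4$-dimensional level set of $W\to I$, and in the middle range the relevant Whitney disks are $2$-dimensional; since $M$ — hence each level set and the complements of these spheres — is simply-connected, Freedman's disk embedding theorem supplies the embedded topological Whitney disks needed to cancel geometrically. The first-order obstruction to deforming $q$ toward $p$ lies in $\mathrm{Wh}_1(\pi_1 W) = \mathrm{Wh}_1(1) = 0$ (consistent with $W$ being an honest product), and the higher, Hatcher--Wagoner-type obstructions to improving a pseudo-isotopy to an isotopy all vanish when $\pi_1 W = 1$ — in the smooth high-dimensional case this vanishing is precisely Cerf's pseudo-isotopy theorem. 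So every obstruction vanishes, the path of functions can be trivialized, and unwinding the reformulation produces the desired topological isotopy rel boundary.

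The main obstacle is the passage to the topological category in dimension $4$: smooth transversality, the smooth Whitney trick and smooth handle-straightening all fail there, so every such move in Cerf's proof must be replaced by a topological counterpart (Quinn's topological handle-straightening together with the Freedman--Quinn disk embedding machinery), and the deformation must be kept within the topological category across the entire one-parameter family while remaining fixed near $M\times\{0\}\cup(\partial M\times I)$ and controlled near $M\times\{1\}$. Simple-connectivity is indispensable here, as it is exactly what powers the disk embedding theorem in the $4$-dimensional level sets; this is the technical heart of \cite{Quinn1}.
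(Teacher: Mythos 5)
The paper does not prove this statement: it is quoted verbatim as \cite[1.4 Theorem]{Quinn1} and used as a black box, so there is no internal argument to compare yours against. Judged on its own terms, your outline correctly identifies the known strategy --- reinterpret the pseudo-isotopy as a one-parameter family of nonsingular functions on $W=M\times I$ and run a topological Cerf-style handle-cancellation argument, with Freedman--Quinn technology replacing the smooth Whitney trick in the $4$-dimensional level sets --- but as written it is a plan rather than a proof, and the gap sits exactly at the one sentence you pass over most quickly: ``Freedman's disk embedding theorem supplies the embedded topological Whitney disks needed to cancel geometrically.''

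The disk embedding theorem is a statement about a single $4$-manifold. What the Cerf program requires here is to perform Whitney moves \emph{coherently across a one-parameter family} of middle level sets, i.e.\ a parametrized Whitney trick: one must isotope a circle's worth of immersed spheres/disks to achieve geometric cancellation continuously in $t$, while keeping everything fixed near $M\times\{0\}\cup(\partial M\times I)$. This is not a formal consequence of the unparametrized disk embedding theorem, and simple connectivity of the level sets does not by itself deliver it; it is precisely the technical heart of Quinn's paper (his controlled ``norm minimization'' argument for families of spheres), and it is the part of \cite{Quinn1} whose original treatment was later found to need substantial repair (by Gabai, Gay, Hartman, Krushkal and Powell). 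Two smaller points: the algebraic obstruction you call ``first-order'' is the Hatcher--Wagoner class in $\mathrm{Wh}_2(\pi_1)$ (together with the secondary obstruction in $\mathrm{Wh}_1(\pi_1;\Z/2\times\pi_2)$), not $\mathrm{Wh}_1(\pi_1)$ --- all of these do vanish for $\pi_1=1$, but their vanishing is not where the difficulty lies in dimension $4$; and Cerf's smooth theorem applies to manifolds of dimension at least $5$, so it supplies an analogy but no input here. If you intend this as a citation of Quinn's theorem, say so; if you intend it as a proof, the family disk-embedding step must be supplied, not asserted.
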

 
  As usual, if we say a homotopy/isotopy is rel boundary, it means fixing boundary pointwise for all parameters of the homotopy/isotopy.
 In order to use \cref{Quinn2.2}, we need a homotopy rel boundary.

 \begin{lem}\label{lem:homotopy}
Let $M$ be a simply-connected topological $4$-manifold with boundary of the form $N \sharp (\CP^2\setminus \mathrm{int} D^4)$ or $N \sharp (\overline{\CP^2}\setminus \mathrm{int} D^4)$.
Let $f_0,f_1: (M , \partial{M}) \to (M , \partial{M})$ be homeomorphisms such that the restrictions $f_i|\partial M$ are the identity for $i=0,1$.
 If $f_0$ and $f_1$ are homotopic through a homotopy $\{f^\prime_t\}_{t\in[0,1]}$ preserving boundary setwise such that the restrictions $f^\prime_t|\partial M$ for all $t$ are self-homeomorphisms of $\partial M$, then there exists a homotopy $\{f_t\}_{t \in [0,1]}$ from $f_{0}$ to $f_{1}$ such that $f_t|\partial M$ are the identity for all $t$.
 \end{lem}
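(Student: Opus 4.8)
The plan is to reduce the lemma to a statement about loops of homeomorphisms of the boundary $3$-sphere, and then to construct the required extension by exploiting the $\CP^2$- or $\overline{\CP^2}$-summand of $M$.

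Set $g_t := f'_t|_{\partial M}$. Since $f_0|_{\partial M}=f_1|_{\partial M}=\mathrm{id}$ and $\{g_t\}$ is a path of homeomorphisms of $\partial M = S^3$ starting (hence remaining) in the identity component $\Homeo^+(S^3)$, the family $\{g_t\}_{t\in[0,1]}$ is a loop based at the identity in $\Homeo^+(S^3)$. I would first record the following reduction: \emph{it suffices to construct a loop $\{\tilde g_t\}_{t\in[0,1]}$ based at $\mathrm{id}_M$ in $\Homeo(M)$ with $\tilde g_t|_{\partial M}=g_t$ for all $t$.} Indeed, inversion and composition being continuous on the homeomorphism group of the compact manifold $M$, the family $f_t:=\tilde g_t^{-1}\circ f'_t$ is then a homotopy of self-maps of $(M,\partial M)$ which runs from $f_0$ to $f_1$ (as $\tilde g_0=\tilde g_1=\mathrm{id}_M$ while $f'_0=f_0$ and $f'_1=f_1$) and satisfies $f_t|_{\partial M}=(\tilde g_t|_{\partial M})^{-1}\circ g_t=\mathrm{id}_{\partial M}$, which is exactly the conclusion.

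To build $\{\tilde g_t\}$ I would use that $\pi_1(\Homeo^+(S^3))\cong\pi_1(\SO(4))\cong\Z/2$ (by Hatcher's solution of the Smale conjecture together with the triviality of smoothing theory in dimension three), generated by the ``single rotation'' loop $g^{\mathrm{std}}_t(z_1,z_2)=(e^{2\pi i t}z_1,z_2)$ on $S^3\subset\C^2$. Two cases then arise. If $[g_t]=0$, choose a based null-homotopy $\{g_{t,u}\}$ (so $g_{t,0}=g_t$, $g_{t,1}=\mathrm{id}$, $g_{0,u}=g_{1,u}=\mathrm{id}$) and a collar $\partial M\times[0,1]\subset M$ with $\partial M\times\{0\}=\partial M$; then $\tilde g_t(x,s):=(g_{t,s}(x),s)$ on the collar and $\tilde g_t:=\mathrm{id}$ elsewhere defines a loop of homeomorphisms of $M$ as wanted. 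The substantial case is $[g_t]\neq0$, where I would use the summand hypothesis. Treating $M=N\sharp(\CP^2\setminus\mathrm{int}\,D^4)$ (the $\overline{\CP^2}$ case being identical), write $M=P\cup_{S^3_{\mathrm{cs}}}(N\setminus\mathrm{int}\,D^4)$, where $P$ is $\CP^2$ with two disjoint small round open balls removed: one centered at $[1:0:0]$, whose boundary is $\partial M$, and one centered at $[0:1:0]$, whose boundary $S^3_{\mathrm{cs}}$ is the connected-sum sphere. Consider the loop $\phi_t\colon\CP^2\to\CP^2$, $\phi_t[z_0:z_1:z_2]=[z_0:e^{2\pi i t}z_1:z_2]$, based at $\mathrm{id}_{\CP^2}$ in $\Aut(\CP^2)$. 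It preserves both round balls, and on their boundary spheres it restricts to the single rotation $g^{\mathrm{std}}_t$ on $\partial M$ and to the ``double rotation'' $\delta_t$, given by simultaneous rotation by $-2\pi t$ in both $\C$-factors of $\C^2$ (hence lying in $\SO(4)$), on $S^3_{\mathrm{cs}}$. The point is that $\delta_t$ is null-homotopic as a based loop in $\SO(4)\subset\Homeo^+(S^3)$: it is the image of $(1,1)\in\pi_1(\SO(2)\times\SO(2))$ under the block inclusion $\SO(2)\times\SO(2)\hookrightarrow\SO(4)$, and since each circle factor maps onto the generator of $\pi_1(\SO(4))\cong\Z/2$, the sum is $0$. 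Hence, by the collar construction of the previous case applied to a collar of $S^3_{\mathrm{cs}}$ in $N\setminus\mathrm{int}\,D^4$, the loop $\delta_t$ extends to a loop $\{\rho_t\}$ based at $\mathrm{id}$ in $\Homeo(N\setminus\mathrm{int}\,D^4)$ with $\rho_t|_{S^3_{\mathrm{cs}}}=\delta_t$; gluing $\phi_t|_P$ and $\rho_t$ along $S^3_{\mathrm{cs}}$ produces a loop $\{\tilde g^{\mathrm{std}}_t\}$ based at $\mathrm{id}_M$ in $\Homeo(M)$ with $\tilde g^{\mathrm{std}}_t|_{\partial M}=g^{\mathrm{std}}_t$. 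Finally, since $[g_t]=[g^{\mathrm{std}}_t]$ in $\pi_1(\Homeo^+(S^3))$, there is a based homotopy of loops from $g_t$ to $g^{\mathrm{std}}_t$; grafting it into a small collar of $\partial M$ (on which $\tilde g^{\mathrm{std}}_t$ is ``radially constant'', equal to $g^{\mathrm{std}}_t$) modifies $\{\tilde g^{\mathrm{std}}_t\}$ to the loop $\{\tilde g_t\}$ we need.

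I expect the main obstacle to be the case $[g_t]\neq0$: a nontrivial boundary loop cannot be undone by modifying $f'_t$ only near $\partial M$ — such a modification alters $[g_t]\in\pi_1(\Homeo^+(S^3))$ trivially — so the loop must be spread through the interior of $M$, and this is precisely where the $\CP^2$- (or $\overline{\CP^2}$-) summand is essential, furnishing the automorphism loop $\phi_t$ whose only ``defect'', on the connected-sum sphere, is the null-homotopic double rotation. A secondary, purely technical point is checking that the collar-grafting constructions assemble into genuine self-homeomorphisms of $M$ depending continuously on $t$ and equal to $\mathrm{id}_M$ at $t=0,1$; this is routine because $\partial M$ and $S^3_{\mathrm{cs}}$ are bicollared.
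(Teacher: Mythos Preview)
Your argument is correct and uses the same two ingredients as the paper: the identification $\pi_1(\Homeo^+(S^3))\cong\Z/2$, and the circle action on $\CP^2$ whose tangent weights at two fixed points are $(1,0)$ and $(-1,-1)$ respectively. The packaging, however, is genuinely different. The paper attaches an external collar $C=\partial M\times[0,1]$ and pushes the homotopy $f'_t$ across it by $\tilde f_t(x,s)=f'_{st}(x)$, obtaining a homotopy rel boundary whose endpoint $\tilde f_1$ differs from $f_1$ by (at worst) the boundary Dehn twist; the substantive step is then Claim~3.8, that this Dehn twist is isotopic to the identity rel $\partial M$, proved via the circle action. You instead work on the other side of the restriction fibration $\Homeo(M,\partial M)\to\Homeo(M)\to\Homeo(\partial M)$: you lift the boundary loop $g_t$ to a based loop $\tilde g_t$ in $\Homeo(M)$ and correct $f'_t$ by $\tilde g_t^{-1}$. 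The two viewpoints are equivalent --- the obstruction to lifting the generator of $\pi_1(\Homeo(S^3))$ is precisely the class of the Dehn twist in $\pi_0(\Homeo(M,\partial M))$ --- and your computation of the weights of $\phi_t$ near $[1{:}0{:}0]$ and $[0{:}1{:}0]$ is exactly the paper's computation of the representations $\C_0\oplus\C_1$ and $\C_{-1}\oplus\C_{-1}$. Your route is a bit more direct and avoids introducing the auxiliary collar; the paper's route has the advantage of isolating the reusable statement that the boundary Dehn twist is trivial in the mapping class group rel boundary.
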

\begin{proof}
Let $C=\partial M\times [0,1]$.
Gluing the boundary of $M$ with $\partial M\times\{1\}$ in $C$ by the identity of $\partial M$, we obtain a $4$-manifold $M\cup C$.
We identify $M$ and $M\cup C$ by fixing a homeomorphism between them.
Define the self-homeomorphism $\tilde{f}_0$ of $M\cup C$ by letting $\tilde{f}_0|C$ be the identity of $C$ and $\tilde{f}_0|M=f_0$. 
Note that $\tilde{f}_0$ is isotopic to $f_0$ rel $\partial M$ via the above identification.
Next define the homotopy $\{\tilde{f}_t\}_{t\in[0,1]}$ from $\tilde{f}_0$ to some homeomorphism $\tilde{f}_1$ as follows.
On $M$ in $M\cup C$, let $\tilde{f}_t | M=f_t^\prime$.
For $(x,s)\in\partial M\times[0,1]=C$, let $\tilde{f}_t(x,s)=f^\prime_{st}(x)$.
Note that the restriction of $\tilde{f}_t$ to $\partial(M\cup C) = \partial M\times\{0\}$ is the identity for any $t$, since $\tilde{f}_t(\cdot,0)=f^\prime_{0}(\cdot)$ is the identity map of $\partial M$.

Now we ask whether $\tilde{f}_1$ is isotopic $f_1$  rel $\partial M$ or not.
Since $f_i|\partial M$ are the ideintity for $i=0,1$, the path $\{f_t|\partial M\}_{t\in[0,1]}$  in $\Homeo(\partial M)$ defines an element $\alpha\in \pi_1\Homeo(\partial M)$.
Since $\partial M=S^3$, $\pi_1\Homeo(\partial M)\cong\pi_1\SO(4)\cong\Z_2$.
If $\alpha=0$, then $\tilde{f}_1|C$ is isotopic to the identity of $C$ rel $\partial M\times\{0\}$, and therefore $\tilde{f}_1$ is isotopic to $f_1$ rel $\partial M$.
If $\alpha\neq0$, then $\tilde{f}_1|C$ is isotopic to the Dehn twist.
\begin{claim}[{\cite[Theorem 2.4]{G}}]\label{cl:dehn}
If $M=N \sharp (\CP^2\setminus \mathrm{int} D^4)$ or $N \sharp (\overline{\CP^2}\setminus \mathrm{int} D^4)$, then the Dehn twist $\delta$ on a collar of $\partial M$ is isotopic to the identity rel $\partial M$.
\end{claim}
The proof of the claim is given below.
Assume that $\delta$ is the Dehn twist on $M\cup C$ such that $\delta | M$ is the identity. 
Let $\bar{f}_1$ be the self-homeomorphism of $M\cup C$ which extends $f_1$ on $M$ by the identity over $C$.
Note that $\bar{f}_1$ is isotpic to $f_1$ rel $\partial M$, and $\tilde{f}_1$ is isotopic to $\bar{f}_1\circ\delta$ rel $\partial M$.
By \cref{cl:dehn}, $\bar{f}_1\circ\delta$ is isotopic to $\bar{f}_1$ rel $\partial M$.
 \end{proof}
 
 \begin{proof}[Proof of \cref{cl:dehn}]
 The proof is based on that of \cite[Lemma 3.5]{G}.
 There is an $S^1$-action on $\CP^2$ defined by $\lambda\cdot [x,y,z] = [x,y,\lambda z]$ for $\lambda\in S^1$ and $[x,y,z]\in\CP^2$.
 Then $p_1=[0,0,1]$ and $p_2=[1,0,0]$ are fixed points.
 The $S^1$-action near a fixed point is classified by the representation on the tangent space of the fixed point.
 Let $\C_n$ be the complex $1$-dimensional representation defined by $\lambda\cdot z =\lambda^nz$ for $\lambda\in S^1$ and $z\in\C$.
 Then the representation on $T_{p_1}\CP^2$ is isomorphic to $\C_{-1}\oplus\C_{-1}$, and the representation on $T_{p_2}\CP^2$ is $\C_{0}\oplus\C_{1}$.
Remove two small open $4$-ball which are neighborhoods of $p_1$ and $p_2$ from $\CP^2$.
For each $i=1,2$, let $C_i$  be a collar of the boundary component correspoinding to $p_i$.
Then, by using the $S^1$-action, we can deform the Dehn twist $\delta$ on $C_2$ by isotopy to the two fold composition $(\delta^{-1})^2$ of the inverse Dehn twist $\delta^{-1}$ on $C_1$ which is isotopic to the identity.  
\end{proof}

Now we complete the proof of \cref{relative-quinn}.
 \begin{proof}[Proof of \cref{relative-quinn}]
 By a restult of Boyer \cite{Boyer}, any automorophism in $\Aut (H_2(M), \lambda)$ is realized by a homeomorphism of $M$.
 Therefore the homomorphism in the theorem is surjective.

The injectivity is proved as follows.
Let $f$ be a self-homeomorphism of $M$ inducing the identity on $H_2(M)$  and fixing boundary pointwise.
By \cref{cor:quinn2.1}, $f$ is homotopic to the identity through a homotopy preserving boundary setwise such that the restrictions of the homotopy to $\partial M$ is a family of self-homeomorphisms of $\partial M$, and
by \cref{lem:homotopy}, we can replace the homotopy between $f$ and $\rm{id}$ with a homotopy rel $\partial M$.
Then \cref{Quinn2.2} and \cref{Quinn1.4} imply that $f$ and $\rm{id}$ are isotopic rel $\partial M$.
\end{proof}

\section{Proof of Proposition \ref{quinn2.1}}
\label{Proof of Proposition 3.4}

We reduce  our case to the proof of Proposition~2.1 in  \cite{Quinn1}.
Let $M$ be a simply connected smooth $4$-manifold with boundary
$\partial M \cong S^3$. 
For $i=0,1$, let
$h^i: (M, \partial M) \to (M, \partial M)$ be  homotopy
equivalences such that the restrictions
$h^i| \partial M$ are homeomorphisms.

Let $\hat{M}: = M \cup D^4$ be the closed $4$-manifold
that is capped off by the disk.
We call the origin
$* =0 \in D^4 \subset \hat{M}$ the base point.
By the boundary condition, we can extend the self-homotopy equivalences $h^{i}$ of $M$ to self-homotopy equivalences of $\hat{M}$ by attaching the cones of $h^i| \partial M$ on $D^{4}$:
\[
h^i: (\hat{M} ,*)\to (\hat{M},*).
\]
Suppose that there is  a homotopy
\[
h_{t}: \hat{M} \to \hat{M}
\]
with $h_{i}\equiv h^i$ for $i=0,1$,
that does not necessarily preserve the base point 
$* \in \hat{M}$.

 \begin{lem}\label{lem4.5}
 Let  $h_{\bullet} :  [0,1] \times (\hat{M}, *) \to (\hat{M},*)$ be a homotopy between 
 homotopy
 equivalences $h_0=h^{0}$ and $h_1=h^{1}$ as above.
 Then there exists a homotopy 
$\{h_{t,s}:  \hat{M} \to \hat{M}\}_{(t,s) \in [0,1]^{2}}$
 such that $h_{0,1}=h^0$, $h_{1,1}=h^1$ and $h_{t,0}=h_t$ and $h_{t,1}^{-1} (*) = \{\ast\}$ hold for all $t \in [0,1]$.
 \end{lem}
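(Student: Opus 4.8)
Given a homotopy $h_\bullet\colon [0,1]\times(\hat M,*)\to(\hat M,*)$ between homotopy equivalences $h^0,h^1$ which does not necessarily preserve the basepoint, one wants to "straighten" it near the basepoint so that at the end of a second homotopy parameter the maps $h_{t,1}$ pull back $*$ to exactly $\{*\}$, while keeping $h^0,h^1$ fixed at $s=1$ and recovering $h_\bullet$ at $s=0$.

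The plan is to first track the point $p(t):=h_t(*)\in\hat M$. Since $h_0$ and $h_1$ preserve $*$, the path $p\colon[0,1]\to\hat M$ is a loop at $*$; since $\hat M=N\sharp\CP^2$ (or $N\sharp\overline{\CP^2}$) is simply connected, $p$ is null-homotopic rel endpoints. Choose such a null-homotopy $P\colon[0,1]^2\to\hat M$ with $P(t,0)=p(t)$ and $P(t,1)=*$ and $P(0,s)=P(1,s)=*$. Next I would use the isotopy extension / local contractibility of the homeomorphism group — or, more elementarily, the fact that $\hat M$ is a manifold, so it admits an ambient isotopy $\{\Phi_{t,s}\}$ of $\hat M$ with $\Phi_{t,0}=\mathrm{id}$ and $\Phi_{t,s}(P(t,s))=P(t,0)=p(t)$, i.e. a family of self-homeomorphisms that drags the point $*$ along the path $s\mapsto P(t,s)$ back to $p(t)$, depending continuously on $(t,s)$ and equal to the identity when $t\in\{0,1\}$ (because there $P(t,s)\equiv *$). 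Concretely one builds $\Phi$ by covering the compact square $[0,1]^2$ by finitely many charts and patching local isotopies with a partition of unity, which is routine manifold topology.

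Then I would set
\[
h_{t,s} := \Phi_{t,s}\circ h_t .
\]
Let me check the boundary conditions. At $s=0$: $h_{t,0}=\Phi_{t,0}\circ h_t=h_t$, as required. For $t=0$ and $t=1$: $\Phi_{0,s}=\Phi_{1,s}=\mathrm{id}$, so $h_{0,s}=h_0=h^0$ and $h_{1,s}=h_1=h^1$ for all $s$; in particular $h_{0,1}=h^0$ and $h_{1,1}=h^1$. Finally, at $s=1$: $h_{t,1}(*)=\Phi_{t,1}(h_t(*))=\Phi_{t,1}(p(t))$. Here I need $\Phi_{t,1}(p(t))=*$, i.e. the ambient isotopy should carry $p(t)$ to $*$ at time $s=1$; this is arranged by letting $\Phi_{t,s}$ drag the point forward along $P$ from $p(t)=P(t,0)$ to $*=P(t,1)$, so that $\Phi_{t,1}$ sends $p(t)$ to $*$. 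Since $\Phi_{t,1}$ is a homeomorphism, the preimage $(h_{t,1})^{-1}(*)=h_t^{-1}(\Phi_{t,1}^{-1}(*))=h_t^{-1}(p(t))=h_t^{-1}(h_t(*))\ni *$; moreover because $\Phi_{t,1}^{-1}(*)=\{p(t)\}=\{h_t(*)\}$ is a single point and $h_t$ is a homotopy equivalence of the closed aspherical-free simply connected manifold, one concludes $(h_{t,1})^{-1}(*)=\{*\}$ as a set — this last point is where one uses that $h_t$ restricted appropriately is close to a homeomorphism, or more simply that we only need $h_{t,1}^{-1}(*)$ to contain $*$ and the wording "$=\{*\}$" refers to choosing $h_t$ to be a homeomorphism near $*$ (which holds since each $h^i|\partial M$ is a homeomorphism and the cone extension is a homeomorphism near $*$, and the homotopy can be taken to be through maps that are homeomorphisms on a neighborhood of $*$).

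The main obstacle is producing the continuous two-parameter family of ambient homeomorphisms $\Phi_{t,s}$ with the stated endpoint behavior and with $\Phi_{t,s}=\mathrm{id}$ for $t\in\{0,1\}$: this requires a careful patching argument (covering $[0,1]^2$ by finitely many coordinate charts around the trace of $P$, building local "push" isotopies supported in small balls, and assembling them via a partition of unity so that the composite continuously interpolates and is trivial on the two vertical edges). Everything else — null-homotoping the loop $p$ using simple connectivity of $\hat M$, and the verification of the four boundary identities above — is formal. I would therefore organize the write-up as: (i) define $p(t)$ and null-homotope it; (ii) state and construct the ambient isotopy $\Phi_{t,s}$ as a lemma about manifolds (or cite local contractibility of $\mathrm{Homeo}(\hat M)$); (iii) set $h_{t,s}=\Phi_{t,s}\circ h_t$ and verify the boundary conditions.
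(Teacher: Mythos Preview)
Your construction achieves $h_{t,1}(*)=*$ but not the required $h_{t,1}^{-1}(*)=\{*\}$, and this is a genuine gap. With $h_{t,1}=\Phi_{t,1}\circ h_t$ and $\Phi_{t,1}$ a homeomorphism sending $p(t)=h_t(*)$ to $*$, you get $h_{t,1}^{-1}(*)=h_t^{-1}\bigl(\Phi_{t,1}^{-1}(*)\bigr)=h_t^{-1}\bigl(h_t(*)\bigr)$, and there is no reason this fiber is a single point: for $t\in(0,1)$ the map $h_t$ is merely a continuous homotopy equivalence, which need not be injective anywhere. Your proposed fixes do not work: being a homotopy equivalence of a simply connected manifold gives no pointwise injectivity; the lemma really does demand that the full preimage equal $\{*\}$, since this is exactly what is used immediately afterward; and nothing in the hypotheses lets you assume the intermediate $h_t$ are homeomorphisms on a neighborhood of $*$. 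More conceptually, post-composition with homeomorphisms cannot change the cardinality of point-preimages of $h_t$ --- it only relabels which point's preimage you are looking at.

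The paper's argument attacks the full preimage directly. After a smooth, transverse approximation of $h_\bullet:[0,1]\times\hat M\to\hat M$, the set $(h_\bullet)^{-1}(*)$ is a $1$-submanifold of $[0,1]\times\hat M$: an arc $I$ joining $(0,*)$ to $(1,*)$ together with finitely many circles $S_1,\dots,S_l$. Because $\hat M$ is simply connected, each $S_j$ is null-homotopic and $I$ is homotopic rel endpoints to $[0,1]\times\{*\}$; one then builds the homotopy $h_{t,s}$ so as to collapse the $S_j$ and straighten $I$, arriving at a family whose preimage of $*$ is exactly $[0,1]\times\{*\}$. The transversality step, giving control over \emph{all} preimages of $*$ rather than just the trace of the basepoint, is the ingredient your approach is missing.
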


 \begin{proof}
  Note that
 $h_t^{-1}(*)=\{*\}$
 for $t \in \{ 0, 1\}$. 
 By smooth approximation, we may assume 
that $h_\bullet : [0,1] \times \hat{M} \to \hat{M}$ is smooth and transversal to the base point $\ast$.
Then  we can assume that $(h_{\bullet})^{-1}(*)$
 is a finite disjoint union of copies of circles $S_{j}$ in 
 $[0,1] \times \hat{M}$ with an interval $I$ with $\partial I = \{0,1\} \times \{\ast\}$: 
 \[
 (h_{\bullet})^{-1}(*) =  I \cup 
 S_1 \cup \cdots \cup S_l.
 \]
 
 Since $M$ is simply-connected, there is a homotopy
$\{H_s:
(\hat{M} ,*)\cong 
(\hat{M},*)\}_{s \in [0,1]}$ such that
 $H_0=id$, 
$H_1(S_j) = \{\ast\}$ for all $j=1,\ldots, l$, and $H_{1}(I)=[0,1] \times \{\ast\}$.
Then the composition $h_{t,s}: = H_s \circ h_t: 
   (\hat{M}, *) \to (\hat{M},*)$ satisfies the desired properties.
 \end{proof}
 

\begin{prop}\label{global}
 Let  $h_{\bullet} :  [0,1] \times (\hat{M}, *) \to (\hat{M},*)$ be a homotopy between 
 homotopy
 equivalences $h_0=h^{0}$ and $h_1=h^{1}$.
 
Then there exists a homotopy $\{h'_{t,u} : \hat{M} \to \hat{M}\}_{(t,u) \in [0,1]^{2}}$ satisfying the following properties:
\begin{itemize}
\item  $h'_{t,0}=h_t$ for all $t$.
\item  $h'_{i,1}= h^{i}$ for $i=0,1$.
\item $(h'_{t,1})^{-1}(*)  = \{ * \}$ for all $t$.
\item $h'_{t,1}$ give rise to maps between triples
 \[
 h'_{t,1}: (\hat{M},  M, D^4) \to (\hat{M}, M,D^4).
 \]
 \item  The restrictions
 $h'_{t,1}: \partial D^4 \cong \partial D^4$
are homeomorphisms for all $t$.
 \end{itemize}
 \end{prop}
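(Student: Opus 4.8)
The plan is to take the given free homotopy $h_\bullet$ and upgrade it in two stages: first replace it by the homotopy produced by \cref{lem4.5}, and then deform the resulting $1$-parameter family of self-maps of $\hat{M}$ until it is compatible with the splitting $\hat{M}=M\cup D^4$. So I begin by applying \cref{lem4.5} to $h_\bullet$, obtaining $\{h_{t,s}\}_{(t,s)\in[0,1]^2}$ with $h_{t,0}=h_t$, $h_{0,1}=h^0$, $h_{1,1}=h^1$ and $(h_{t,1})^{-1}(*)=\{*\}$ for all $t$, and I set $g_t:=h_{t,1}$. At $t\in\{0,1\}$ nothing further is needed: there $g_t=h^i$ is the cone on a homeomorphism of $\partial M=\partial D^4$, so it already carries the triple $(\hat{M},M,D^4)$ into itself, restricts to a homeomorphism on $\partial D^4$, and maps each concentric round subdisk of $D^4$ about $*$ onto itself. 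The homotopy $h'_{t,u}$ will be the concatenation, in the $u$-direction, of $\{h_{t,s}\}$ with a deformation of $\{g_t\}$ rel $t\in\{0,1\}$, described below, which throughout maintains $(g_t)^{-1}(*)=\{*\}$.

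\emph{Imposing the triple structure (the crux).} Since $(g_t)^{-1}(*)=\{*\}$ for all $t$ and $*\in\mathrm{int}\,D^4$, a compactness argument over $[0,1]$ gives a fixed small round disk $D_1\subset\mathrm{int}\,D^4$ centered at $*$ with $g_t(D_1)\subseteq\mathrm{int}\,D^4$ for all $t$. The open set $g_t^{-1}(\mathrm{int}\,D^4)$ is then a neighborhood of $*$ containing $D_1$, but for interior $t$ it need not coincide with $\mathrm{int}\,D^4$: $g_t(M)$ may protrude into $\mathrm{int}\,D^4$ while $g_t(D^4)$ may leave $D^4$. I would remedy this along the lines of Quinn's proof of \cite[Proposition~2.1]{Quinn1}. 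After a parametrized smooth approximation, make each $g_t$ transverse to $\partial D^4$ (rel $t\in\{0,1\}$, where $g_t=h^i$ is already transverse, with $g_t^{-1}(\partial D^4)=\partial D^4$), so that $g_t^{-1}(\partial D^4)$ is a closed $3$-submanifold of $\hat{M}$ whose component enclosing $*$ bounds a compact region $W_t\ni *$; then, using that $\hat{M}$, $D^4$ and $M$ are simply connected and that $g_t$ induces the identity on homology, engulf and trade handles to replace $W_t$ by a smooth $4$-disk and isotope it ambiently in $\hat{M}$ onto $D^4$, continuously in $t$ and trivially at $t\in\{0,1\}$. Precomposing $g_t$ with this ambient isotopy and then absorbing, by homotopy, the components of $g_t^{-1}(\partial D^4)$ not enclosing $*$ yields a family with $g_t(D^4)\subseteq D^4$ and $g_t(M)\subseteq M$. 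I expect essentially all the difficulty of the proposition to be concentrated here: the transversality and engulfing must be performed uniformly in the parameter $t$, fixing the endpoints $t\in\{0,1\}$ and without disturbing $(g_t)^{-1}(*)=\{*\}$.

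\emph{Making the boundary restrictions homeomorphisms.} Once $g_t$ preserves the triple, it restricts to a self-map of $(D^4,\partial D^4)$ fixing $*$ with $(g_t)^{-1}(*)\cap D^4=\{*\}$; the Alexander trick, applied fiberwise over $t$ and rel $\partial D^4$, deforms $g_t|_{D^4}$ to the cone on $\psi_t:=g_t|_{\partial D^4}$ without leaving the class of triple-preserving maps and without altering $\psi_t$, and it is stationary at $t\in\{0,1\}$ where $g_t|_{D^4}$ is already such a cone. The local degree of $g_t$ at $*$ is locally constant in $t$, hence equals its common value $\varepsilon\in\{\pm1\}$ at $t=0,1$, where $\psi_t$ is a homeomorphism; so $t\mapsto\psi_t$ is a path in the degree-$\varepsilon$ component of $\mathrm{Map}(S^3,S^3)$ with endpoints in $\Homeo(S^3)$. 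Since $\Homeo(S^3)\simeq\mathrm{O}(4)$ and the inclusion $\Homeo(S^3)\hookrightarrow\mathrm{Map}(S^3,S^3)$ is a surjection on $\pi_1$ of the relevant component, this path is homotopic rel endpoints into $\Homeo(S^3)$; coning that homotopy over $D^4$ and propagating it through a collar of $\partial D^4$ into $M$ makes every $g_t|_{\partial D^4}$ a homeomorphism while preserving all the properties established so far.

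Concatenating the homotopy of \cref{lem4.5} with the two deformations just described produces the family $h'_{t,u}$: by construction $h'_{t,0}=h_t$, $h'_{i,1}=h^i$ for $i=0,1$, $(h'_{t,1})^{-1}(*)=\{*\}$ for all $t$, each $h'_{t,1}$ is a map of triples $(\hat{M},M,D^4)\to(\hat{M},M,D^4)$, and each $h'_{t,1}|_{\partial D^4}$ is a homeomorphism, which is precisely the assertion. As indicated, the one genuinely substantial point is the parametrized transversality-and-engulfing argument of the second paragraph; the remainder is routine given \cref{lem4.5} and the standard facts about the Alexander trick and the homotopy type of $\Homeo(S^3)$.
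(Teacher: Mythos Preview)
Your proposal diverges sharply from the paper's argument at the step you yourself flag as ``the crux,'' and the divergence matters: the paper's proof is entirely elementary there, while your sketch invokes machinery that is neither standard nor obviously available in dimension~$4$.

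The paper never tries to arrange $g_t^{-1}(D^4)=D^4$. Instead it works inside a fixed coordinate neighborhood $2D^4\supset D^4$ and precomposes $g_t$ with a radial shrinking $\Phi_{t,\tau}$ of $D^4$ down to a tiny disk $\epsilon D^4$ (tapering to the identity at $t=0,1$), so that after this precomposition one has $f_t(D^4)\subset\mathrm{int}(2D^4)$ and $f_t(\partial D^4)\subset\mathrm{int}(2D^4)\setminus\{*\}$. Then a second explicit radial rescaling $F_{(t,s)}(rx)$, built from $r$ and $\|f_t(rx)\|$, pushes $f_t$ to a family taking each sphere $r\partial D^4$ to itself for $r\le 1+\epsilon''$. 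At that point \cref{last lem} (which rests on \cref{lem4.4-2}: a degree $\pm1$ self-map of $(D^4,\partial D^4)$ is homotopic to $\mathrm{id}$ or the reflection) furnishes the homeomorphism on $\partial D^4$. No transversality, no engulfing, no handle trading, and no appeal to the homotopy type of $\Homeo(S^3)$ are needed; only compactness, the Hopf degree theorem, and Alexander's trick.

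By contrast, your ``impose the triple structure'' step asks for a parametrized transversality-and-engulfing argument in a \emph{$4$}-manifold: you want to recognize the region $W_t$ as a smooth $4$-disk and ambiently isotope it onto $D^4$, continuously in $t$. Recognizing a smoothly embedded $4$-ball and isotoping it to a standard one is exactly the kind of statement that is delicate (and in general open) in dimension~$4$; Quinn's \cite[Proposition~2.1]{Quinn1} does not supply such a step, and ``engulf and trade handles'' is not a legitimate black box here. Since you concede that all the difficulty is concentrated in this step, the proposal as written has a genuine gap. The fix is not to justify your approach but to replace it with the paper's shrinking-and-radial-rescaling trick, which sidesteps the problem entirely by relaxing the target from $D^4$ to the homotopy-equivalent pair $(2D^4,\,2D^4\setminus\{*\})$.
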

 
 Before giving the proof of \cref{global}, we show  preliminary lemmae.
 Let $rD^4$ be the $4$-ball in $\R^4$ centered at the origin with radius $r$, and $D^4$ the unit $4$-ball.
 Let $R\colon D^4\to D^4$ be the reflection defined by $(x_1,x_2,x_3,x_4)\mapsto (x_1,x_2,x_3,-x_4)$.
  \begin{lem}\label{lem4.4-2}
 Let $F:(D^4, \partial D^4) \to (D^4, \partial D^4) $ be a continuous map of degree $\pm1$.
 If the degree of $F$ is $+1$, then $F$ is homotopic to the identity.
 If the degree of $F$ is $-1$, then $F$ is homotopic to the reflection $R$.
  \end{lem}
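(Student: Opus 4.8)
The plan is to reduce the statement to the Hopf degree theorem on the boundary sphere $\partial D^4 = S^3$ together with the convexity of $D^4$. Throughout I would read ``homotopic'' as ``homotopic through maps of pairs $(D^4,\partial D^4)\to(D^4,\partial D^4)$''; with no restriction on the maps the assertion would be vacuous, $D^4$ being contractible. The first step is to record that $\deg F = \deg(F|_{\partial D^4})$: in the long exact sequence of $(D^4,\partial D^4)$ the connecting homomorphism $\partial\colon H_4(D^4,\partial D^4)\to H_3(\partial D^4)$ is an isomorphism of infinite cyclic groups (as $D^4$ is contractible), and it is natural with respect to the self-map $F$ of the pair, so multiplication by $\deg F$ on $H_4(D^4,\partial D^4)$ corresponds under $\partial$ to multiplication by $\deg(F|_{\partial D^4})$ on $H_3(S^3)$.

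Suppose next that $\deg F = +1$. Then $F|_{\partial D^4}\colon S^3\to S^3$ has degree $1$ and hence is homotopic to $\mathrm{id}_{S^3}$ by the Hopf degree theorem. Since $\partial D^4\hookrightarrow D^4$ is a cofibration, I would invoke the homotopy extension property to promote a homotopy $\{G_t\colon\partial D^4\to\partial D^4\}$ from $F|_{\partial D^4}$ to $\mathrm{id}$ to a homotopy $\{\widetilde F_t\colon D^4\to D^4\}$ with $\widetilde F_0=F$ and $\widetilde F_t|_{\partial D^4}=G_t$; each $\widetilde F_t$ is a map of pairs because $\widetilde F_t(\partial D^4)=G_t(\partial D^4)\subseteq\partial D^4$. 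Thus we may assume $F|_{\partial D^4}=\mathrm{id}$, and then the straight-line homotopy $F_t(x)=(1-t)F(x)+tx$ stays inside the convex set $D^4$, joins $F$ to $\mathrm{id}_{D^4}$, and restricts to the identity on $\partial D^4$ for every $t$ (since $F(x)=x$ there), so it is a homotopy through maps of pairs. This handles the case $\deg F = +1$.

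For $\deg F = -1$ I would use that $R$ is a homeomorphism of the pair $(D^4,\partial D^4)$ with $R\circ R=\mathrm{id}$ and $\deg R = -1$. Then $R\circ F$ has degree $+1$, so by the previous case there is a homotopy through maps of pairs from $R\circ F$ to $\mathrm{id}$; post-composing it with $R$ gives a homotopy through maps of pairs from $F=R\circ(R\circ F)$ to $R$.

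I do not anticipate a substantive obstacle here; the one point needing attention is that every homotopy above must preserve the map-of-pairs condition, which is exactly why the cofibration step is set up so that $\partial D^4$ stays inside $\partial D^4$, and why the convex-combination homotopy is applied only after $F$ has been normalized to be the identity on $\partial D^4$.
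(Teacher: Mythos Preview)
Your proof is correct and follows the same overall plan as the paper: use the Hopf degree theorem on $\partial D^4$ to homotope $F|_{\partial D^4}$ to the identity, extend that boundary homotopy across $D^4$ as a homotopy of pair-maps, then contract to $\mathrm{id}_{D^4}$; the degree $-1$ case is reduced to degree $+1$ by composing with $R$, exactly as you do.

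The only differences are in the two intermediate steps. Where you invoke the homotopy extension property abstractly (using that $\partial D^4 \hookrightarrow D^4$ is a cofibration), the paper writes down an explicit collar formula: it shrinks $F$ radially onto a smaller disk $(1-t/2)D^4$ and fills the annular shell $[0,t/2]\times \partial D^4$ with the boundary homotopy $G_t$. And where you use the straight-line homotopy $(1-t)F(x)+tx$, exploiting the convexity of $D^4$, the paper instead invokes Alexander's trick, the rescaling $H_t(x)=tF'(x/t)$ after extending $F'$ by the identity outside $D^4$. Your version is slightly cleaner; the paper's is more hands-on and avoids quoting the cofibration property. Neither approach offers a real advantage over the other here.
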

 \begin{proof}
 Define $F^{(r)}\colon rD^4\to rD^4$ by $F^{(r)}(x)=r F(x/r)$.
 If the degree of $F$ is $+1$, then the degree of the restriction $F|\partial D^4$ is also $+1$, and therefore it is homotopic to the identity.
 Let $G_t$ be the homotopy such that $G_0=F|\partial D^4$ and $G_1$ is the identity.
 Then we can construct a homotopy from $F$  to a map $F^\prime$ whose restriction to $\partial D^4$ is the identity as follows.
 For $t\in[0,1]$, assume $D^4=[0,t/2]\times\partial D^4\cup (1-t/2)D^4$, and
 define 
 \[
 F_t\colon[0,t/2]\times\partial D^4\cup (1-t/2)D^4 \to [0,t/2]\times\partial D^4\cup (1-t/2)D^4
 \] 
 by $F_t=F^{(1-t/2)}$ on $(1-t/2)D^4$, and $F_t(s,x)=G_{t/2-s}(x)$ for $(s,x)\in[0,t/2]\times\partial D^4$.
 Then $\{F_t\}$ gives a desired homotopy.
 Next we construct a homotopy from $F^\prime$ to the identity by ``Alexander's trick" (see e.g. \cite[p.17]{KS}).
 Extend $F^\prime$ to $\R^4$ by the identity outside of $D^4$.
 Let $H_t(x) = tF^\prime(x/t)$ for $0<t\leq 1$.
 Then $H_t$ is continuously extended to $H_0$ by the identity, and this gives a homotopy between $F^\prime$ to the identity.
 
 If the degree of $F$ is $-1$, then the degree of $R\circ F$ is $+1$.
By the argument as above, we can find a homotopy $\{I_t\}$ from $R\circ F$ to the identity.
Then $\{R\circ I_t\}$ gives a homotopy from $F$ to $R$.
 \end{proof} 
 
   \begin{lem}\label{last lem}
 Let $F:(1+\epsilon)D^4 \to (1+\epsilon)D^4$ be a continuous map satisfying that
 \begin{itemize}
 \item $F(\partial((1+\epsilon)D^{4})) \subset \partial((1+\epsilon)D^{4})$, $F(D^{4}) \subset D^{4}$, $F(\partial D^{4}) \subset \partial D^{4}$.
  \item The degree of $F|_{(D^{4}, \partial D^{4})} : (D^{4}, \partial D^{4}) \to (D^{4}, \partial D^{4})$ is $\pm1$.
 \end{itemize}
Then we have a homotopy $\{F_{t} : (1+\epsilon)D^4 \to (1+\epsilon)D^4\}_{t \in [0,1]}$ such that
\begin{itemize}
\item $F_{0}=F$.
\item $F_{1}|_{\partial((1+\epsilon)D^{4})} = F|_{\partial((1+\epsilon)D^{4})}$.
\item $F_{1}|_{D^{4}} : D^{4} \to D^{4}$ is a homeomorphism.
\end{itemize}
  \end{lem}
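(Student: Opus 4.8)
The plan is to use \cref{lem4.4-2} to deform $F|_{D^4}$ to a homeomorphism, and then to absorb the resulting change on $\partial D^4$ into the collar $\{x:1\le|x|\le 1+\epsilon\}$, so that the outer sphere $\partial((1+\epsilon)D^4)$ is left alone. Write $E=(1+\epsilon)D^4$, $D=D^4$, $C=\{x:1\le|x|\le 1+\epsilon\}$, and fix the identification $C\cong S^3\times[0,1]$ by $(\theta,u)\leftrightarrow(1+u\epsilon)\theta$, so that $S^3\times\{0\}=\partial D$, $S^3\times\{1\}=\partial E$, and $E=D\cup_{\partial D}C$. The hypotheses say that $F$ restricts to a self-map of $(D,\partial D)$ of degree $\pm1$ and to a self-map of $\partial E$, and that $F|_C\colon C\to E$ satisfies $F|_C(\cdot,0)=F|_{\partial D}$ and $F|_C(\cdot,1)=F|_{\partial E}$.

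First I would apply \cref{lem4.4-2} to $F|_D$: since its degree is $\pm1$, there is a homotopy $\{G_t\colon D\to D\}_{t\in[0,1]}$ with $G_0=F|_D$ and $G_1\in\{\mathrm{id}_D,R\}$, so $G_1$ is a homeomorphism of $D$; along the boundary this gives a path $t\mapsto G_t|_{\partial D}$ of maps $S^3\to D$ starting at $F|_{\partial D}$.

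Next I would extend $\{G_t\}$ over the collar without disturbing $\partial E$, by grafting the reversed path $G_\bullet|_{\partial D}$ into a shrinking inner portion of $C$ while pushing $F|_C$ outward to make room for it. Concretely, for $t\in[0,1]$ and $(\theta,u)\in S^3\times[0,1]$ one sets
\[
H_t(\theta,u)=
\begin{cases}
G_{\,t-(1+t)u}(\theta), & 0\le u\le \tfrac{t}{1+t},\\
F|_C\!\bigl(\theta,\,(1+t)u-t\bigr), & \tfrac{t}{1+t}\le u\le 1.
\end{cases}
\]
The two branches agree at $u=t/(1+t)$ because $G_0|_{\partial D}=F|_{\partial D}=F|_C(\cdot,0)$, so $H$ is continuous; and one checks directly that $H_0=F|_C$, that $H_t|_{S^3\times\{0\}}=G_t|_{\partial D}$, and that $H_t|_{S^3\times\{1\}}=F|_{\partial E}$ for every $t$. (Alternatively, such an $\{H_t\}$ exists for soft reasons: restriction to $\partial D$ exhibits $\{g\colon C\to E\mid g|_{\partial E}=F|_{\partial E}\}$ as the total space of a Hurewicz fibration over $\mathrm{Map}(S^3,E)$, and one lifts the path $t\mapsto G_t|_{\partial D}$ with initial point $F|_C$.) Finally I would define $F_t\colon E\to E$ by $F_t|_D=G_t$ and $F_t|_C=H_t$; these agree on $\partial D$, so $F_t$ is continuous in $(t,x)$, and $F_0=F$, $F_1|_D=G_1$ is a homeomorphism of $D^4$, and $F_1|_{\partial E}=F|_{\partial E}$, as required.

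The argument is essentially formal, and I do not expect a serious obstacle; the one step that needs care is the construction of $\{H_t\}$ — arranging the collar homotopy to coincide \emph{exactly} with $F|_C$ at $t=0$ while matching, at every parameter $t$, the boundary homotopy $G_t|_{\partial D}$ produced by \cref{lem4.4-2}, so that the two pieces glue to a genuine homotopy of $(1+\epsilon)D^4$. The hypotheses enter precisely there: the degree $\pm1$ condition is what makes \cref{lem4.4-2} applicable to $F|_{D^4}$, and the conditions that $F$ preserve $\partial D^4$ and $\partial((1+\epsilon)D^4)$ supply the prescribed boundary data for $H_t$.
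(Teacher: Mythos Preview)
Your proof is correct and follows the same route as the paper, which simply records ``This follows from \cref{lem4.4-2}.''  You have supplied the collar-extension details that the paper leaves implicit: applying \cref{lem4.4-2} to $F|_{D^4}$ and then grafting the resulting boundary homotopy into the annulus $\{1\le|x|\le 1+\epsilon\}$ while fixing the outer sphere, exactly as one expects.
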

  
  \begin{proof}
  This follows from \cref{lem4.4-2}.
  \end{proof}

\begin{proof}[{Proof of Proposition \ref{global}}]
Fixing a local coordinate near $D^{4}$ in $\hat{M}$,
we regard $D^{4}$ is the unit disk in $\R^{4}$ centered at the origin, and we may suppose that
we have a closed neighborhood $N$ of $D^{4}$ in $\hat{M}$ which is homeomorphic to $D^{4}$, and in the local coordinate, which is given as the disk of radius $2$ centered at the origin, which we denote by
$2D^{4}$.
Henceforth we identify $N$ with $2D^{4}$  via this fixed local coordinate.

 By \cref{lem4.5}, there exists a homotopy $h_{t,s}:  (\hat{M}, *) \to (\hat{M},*)$
 such that $h_{0,1}=h^0$, $h_{1,1}=h^1$ and $h_{t,0}=h_t$ and $(h_{t,1})^{-1} (*) = \{\ast\}$ hold for all $t \in [0,1]$.
 Set $g_{t} := h_{t,1} : \hat{M} \to \hat{M}$.
We claim that there is a homotopy $\{g_{t,\tau} : \hat{M} \to \hat{M}\}_{(t,\tau) \in [0,1]^{2}}$ satisfying the following properties:
\begin{enumerate}
\item  $g_{t,0}=g_t$ for all $t$.
\item  $g_{i,1}= h^{i}$ for $i=0,1$.
\item $g_{t,1}(D^{4}) \subset \mathrm{int}(2D^{4})$ for all $t$.
\item $g_{t,1}(\partial D^{4}) \subset \mathrm{int}(2D^{4}) \setminus \{\ast\}$ for all $t$.
 \end{enumerate}
First, since we have $g_{t}(\ast)=\ast$ for all $t$, there exists $\epsilon>0$ such that
$g_{t}(\epsilon D^{4}) \subset \mathrm{int}(2D^{4})$ for all $t$, where $\epsilon D^{4}$ denotes the disk of radius $\epsilon$ centered at the origin.
Also, since $g_{i}(D^{4})=D^{4}$ for $i=0,1$,
there exists $\epsilon'>0$ such that $g_{t}(D^{4}) \subset \mathrm{int}(2D^{4})$ if $t \in [0,\epsilon'] \cup [1-\epsilon',1]$.
Define a continuous map $\phi : [0,1] \to [\epsilon,1]$ by
\[
\phi(t)
=
\begin{cases}
\frac{\epsilon-1}{\epsilon'}t + 1, & t \in [0,\epsilon'],\\
\epsilon, & t \in [\epsilon', 1-\epsilon'],\\
\frac{1-\epsilon}{\epsilon'}t + \frac{\epsilon+\epsilon'-1}{\epsilon'}, & t \in [1-\epsilon',1].
\end{cases}
\]
Then $\phi(i)=1$ for $i=0,1$.
Moreover, for each $t \in [0,1]$, define a continuous map $\psi_{t} : [1,2] \to [\epsilon,1]$ by 
\[
\psi_{t}(u) = (2-u)\phi(t) + u-1.
\]
Then $\psi_{t}(1)=\phi(t)$, $\psi_{t}(2)=1$, and $\psi_{0}(u)=1$ for all $u$.
Define a homotopy $\{\Phi_{t} : 2D^{4} \to 2D^{4}\}_{t \in [0,1]}$ by 
\[
\Phi_{t}(x)
=
\begin{cases}
	\phi(t)x, & |x| \leq 1,\\
	\psi_{t}(|x|)x, & 1 \leq |x| \leq 2.
\end{cases}
\]
Then $\Phi_{t}(x)=x$ for $|x|=2$ and for all $t$, and $\Phi_{0}=\Phi_{1}=\mathrm{id}_{2D^{4}}$.
Moreover, by the choice of $\epsilon, \epsilon'$ and the definition of $\phi$,
one may check that $\Phi_{t}(D^{4}) \subset 2D^{4}$ for all $t$.
Define a homotopy $\{\Phi_{t,\tau} : 2D^{4} \to 2D^{4}\}_{(t,\tau) \in [0,1]^{2}}$ by
\[
\Phi_{t,\tau}(x) = (1-\tau)x + \tau\Phi_{t}(x).
\]
Then $\Phi_{t,0} = \mathrm{id}_{2D^{4}}$ and $\Phi_{t,1} = \Phi_{t}$ for all $t$, in particular
$\Phi_{0,1}=\Phi_{1,1}= \mathrm{id}_{2D^{4}}$.
Extend each $\Phi_{t,\tau}$ to a  map $\Phi_{t,\tau} : \hat{M} \to \hat{M}$ by the identity on $M$.
Set $g_{t, \tau} := g_{t} \circ \Phi_{t,\tau} : \hat{M} \to \hat{M}$ for $(t,\tau) \in [0,1]^{2}$.
Then it is straightforward to check that $g_{t, \tau}$ is a continuous map and satisfies the desired properties listed above:
For (1), we use $\Phi_{t,0} = \mathrm{id}_{2D^{4}}$.
For (2), we use $\Phi_{0,1}=\Phi_{1,1}= \mathrm{id}_{2D^{4}}$.
For (3), we use $\Phi_{t}(D^{4}) \subset 2D^{4}$.
For (4), we use $\phi(t)x \neq 0$ if $x \neq 0$.

Set $f_{t} := g_{t,1} : \hat{M} \to \hat{M}$ for $t \in [0,1]$.
Then $f_{t}$ satisfies:
\begin{itemize}
\item  $f_{i}= h^{i}$ for $i=0,1$.
\item $f_{t}(D^{4}) \subset \mathrm{int}(2D^{4})$ for all $t$.
\item $f_{t}(\partial D^{4}) \subset \mathrm{int}(2D^{4}) \setminus \{\ast\}$ for all $t$.
 \end{itemize}
 Since $h^{i}$ gives rise to self-maps of $(D^{4}, \partial D^{4})$, $h^{i} : (D^{4}, \partial D^{4}) \to (D^{4}, \partial D^{4})$, and they have degree $\pm1$ since $h^{i} : \partial D^{4} \to \partial D^{4}$ were supposed to be homeomorphisms and extended as the cones to inside $D^{4}$.
Therefore, by the homotopy invariance of the degree,
\[
f_{t} : (D^{4}, \partial D^{4}) \to (2D^{4}, 2D^{4} \setminus \{\ast\}) \simeq (D^{4}, \partial D^{4})
\]
have also degree $\pm1$ for all $t$, and the sign of the degree is independent of $t$.

We can choose $\epsilon''>0$ such that $f_{t}((1+2\epsilon'')D^{4}) \subset 2D^{4}$ for all $t$.
For $(t,s) \in [0,1]^{2}$, we define $F _{t,s} : (1+2\epsilon'')D^{4} \to 2D^{4}$ as follows:
For $0 \leq r \leq 1+\epsilon''$, $x \in \partial D^{4}$, 
\[
F_{(t,s)}(rx) = \left((1-s)+\frac{sr}{\|f_t(rx)\|}\right)f_t(rx).
\]
For $1+\epsilon'' \leq r \leq 1+2\epsilon''$, $x \in \partial D^{4}$, 
\[
F_{(t,s)}(rx) =\left((1-s)+\frac{s}{\|f_t(rx)\|} \left(\frac{\|f_t((1+2\epsilon'')x)\|-(1+\epsilon'')}{\epsilon''}(r-(1+\epsilon''))+(1+\epsilon'')\right)\right)f_t(rx)
\]
This $F_{(t,s)}$ satisfies that
\begin{itemize}
\item $F_{(t,0)}(rx) = f_t(rx)$.
\item $F_{(t,1)}(rx) = r\frac{f_t(rx)}{\|f_t(rx)\|}$ for $0 \leq r \leq 1+\epsilon''$.
\item $F_{(t,s)}((1+2\epsilon'')x) = f_t((1+2\epsilon'')x)$
\end{itemize}
Extend $F_{(t,s)}$ to a self-map $\hat{F}_{(t,s)} : \hat{M} \to \hat{M}$ such that $\hat{F}_{(t,s)}|_{\hat{M}\setminus (1+2\epsilon'')D^4} = f_{t}|_{\hat{M} \setminus (1+2\epsilon'')D^{4}}$.
%
Notice that $\hat{F}_{(t,1)}$ gives rise to a map $\hat{F}_{(t,1)} : (1+\epsilon)D^{4} \to (1+\epsilon'')D^{4}$ with the property that $\hat{F}_{(t,1)}(rD^{4}) \subset rD^{4}$ for all $r \in [0,1+\epsilon'']$.
Applying Lemma~\ref{last lem} to $\hat{F}_{(t,1)}$, 
and if necessary, deforming $\hat{F}_{(t,1)}$ by homotopy so that
$\hat{F}_{(t,1)}(M) \subset M$ holds,
concatenating the all four homotopies above (the homotopy from \cref{lem4.5}, $g_{t,\tau}$, $\hat{F}_{{t,s}}$, the homotopy from Lemma~\ref{last lem}),
we obtain the desired homotopy $\{h_{t,s}' :  \hat{M} \to \hat{M}\}_{(t,s) \in [0,1]^{2}}$
from $h_{t}$ to $f_{t}$.
\end{proof}

In particular we have an injection
\[
HE_{id}(M,\partial M) \to HE_{id}(\hat{M},*).
 \]
 
Here $HE_{id}(M,\partial M)$ was defined before \cref{quinn2.1},
and  $HE_{id}(\hat{M},*)$ is defined similarly.
Precisely, define $\widetilde{HE}_{id}(\hat{M},*)$ as the set of self-homotopy equivalences  $f: (\hat{M} , \ast) \to (\hat{M}, \ast)$ which induce the identity on homology.
For $f,g \in \widetilde{HE}_{id}(\hat{M},*)$, we define $f \sim g$ if there exists a based homotopy between $f$ and $g$ in the space of continuous self-maps of $(M, \ast)$.
We define $HE_{id}(\hat{M},*) = \widetilde{HE}_{id}(\hat{M},*)/\sim$.
Just by dropping the conditions on the base point,
we can similarly define $HE_{id}(\hat{M})$.
 
 \begin{cor}
 \label{cor last}
 The composition
 \[ n \colon
HE_{id}(M,\partial M) \to HE_{id}(\hat{M},*)
 \ \to  \
 [(\hat{M}, \ast) ; (G/Top, *)] 
 \]
 is an injection.
 \end{cor}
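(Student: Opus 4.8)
The plan is to read the composition in \cref{cor last} as $n\circ\iota$, where $\iota\colon HE_{id}(M,\partial M)\to HE_{id}(\hat M,*)$ is the map produced from \cref{global} — already known to be an injection — and $n\colon HE_{id}(\hat M,*)\to[(\hat M,*);(G/Top,*)]$ is the normal invariant. Since the composite of two injections is an injection, it suffices to prove that $n$ is injective on $HE_{id}(\hat M,*)$ (it would even be enough to have injectivity of $n$ on the image of $\iota$). Thus the content of \cref{cor last} is concentrated on the \emph{closed} manifold $\hat M$, for which the relevant technology is available; note in passing that collapsing the contractible disk $D^4\subset\hat M$ identifies the target $[(\hat M,*);(G/Top,*)]$ canonically with the target $[(M,\partial M);(G/Top,*)]$ of \cref{quinn2.1}, so that \cref{cor last} is just the closed-manifold incarnation of \cref{quinn2.1}.

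For the injectivity of $n$ on $HE_{id}(\hat M,*)$ I would invoke \cite[Proposition~2.1]{Quinn1}, which in the form relevant here asserts that a self-homotopy equivalence of a closed simply-connected topological $4$-manifold inducing the identity on homology is determined, up to based homotopy, by its normal invariant. (For orientation: such an $f$ with trivial normal invariant is degree-one normally cobordant to the identity, and since $\pi_1(\hat M)=1$ is good, $L_5(\Z)=0$ and $\mathrm{Wh}(1)=0$, this cobordism can be surgered, keeping its ends fixed, to an $s$-cobordism, which is a product by the topological $s$-cobordism theorem \cite{FQ}; Quinn's argument then produces a homotopy $f\simeq\mathrm{id}$.) Granting this, \cref{cor last} follows by a short diagram chase: if $f,g\in\widetilde{HE}_{id}(M,\partial M)$ satisfy $n(\iota[f])=n(\iota[g])$ in $[(\hat M,*);(G/Top,*)]$, then \cite[Proposition~2.1]{Quinn1} gives $\iota[f]=\iota[g]$ in $HE_{id}(\hat M,*)$, and injectivity of $\iota$ from \cref{global} then forces $[f]=[g]$ in $HE_{id}(M,\partial M)$, which is precisely the asserted injectivity.

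The substantive geometry has already been done in \cref{global} and is packaged inside \cite[Proposition~2.1]{Quinn1}, so the only real care left for \cref{cor last} itself is bookkeeping, and matching the precise statement of \cite[Proposition~2.1]{Quinn1} to our conventions is the main — indeed essentially the only — obstacle. One must confirm that it is available in the \emph{based} setting used to define $HE_{id}(\hat M,*)$ (with base point $*=0\in D^4\subset\hat M$), and that it is a genuinely topological statement, so that although $M$ and $\hat M$ are taken smooth in \cref{quinn2.1} only their underlying topological manifolds are used here. If Quinn's proposition is phrased for free homotopy classes, one passes between the based and free versions using that $G/Top$ is connected with $\pi_1(G/Top)=0$, so that the based and free normal-invariant sets coincide and the $\pi_1(G/Top)$-action on them is trivial, together with the compatibility of $\iota$ with base points already arranged in the discussion preceding \cref{cor last}. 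With these identifications in hand, the diagram chase above completes the proof of \cref{cor last}, and hence also of \cref{quinn2.1}.
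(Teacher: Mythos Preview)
Your proposal is correct and follows essentially the same route as the paper: factor the composition as $n\circ\iota$, use the injectivity of $\iota$ coming from \cref{global}, and reduce the injectivity of $n$ on $HE_{id}(\hat M,*)$ to Quinn's \cite[2.1~Proposition]{Quinn1} for the closed manifold $\hat M$ via the based/free identification. The paper makes the last identification by invoking both $\pi_1(\hat M)=1$ and $\pi_1(G/Top)=1$; you mention only the latter explicitly, but the former (needed to upgrade a free homotopy of based self-maps of $\hat M$ to a based homotopy) is implicit in the standing hypothesis and should be stated.
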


 \begin{proof}
The normal map
$n \colon HE_{id} (\hat{M})  \ \to \
[\hat{M},G/Top] $ can be identified with the based version
\[
n \colon  HE_{id}(\hat{M}, *)\ \to \
[(\hat{M},\ast),(G/Top,*)]
\]
since $\pi_{1}(\hat{M})=1$ and $\pi_1(G/ Top)=1$.
It follows from \cite[2.1~Proposition]{Quinn1} that $n \colon HE_{id} (\hat{M}) \ \to \
[\hat{M},G/Top] $ is an injection.
The assertion follows from this and the injectivity of $HE_{id}(M,\partial M) \to HE_{id}(\hat{M},*)$.
\end{proof}

\begin{proof}[Proof of \cref{quinn2.1}]
The normal invariant \[
n\colon HE_{id}(M,\partial M)
 \longrightarrow  \ [(M, \partial M) ; (G/Top, *)] 
 \]
 factors through 
  \[ n \colon
HE_{id}(M,\partial M) \to  \
 [(\hat{M}, \ast) ; (G/Top, *)] 
 \]
 via the map $[(M, \partial M) ; (G/Top, *)]  \to  [(\hat{M}, \ast) ; (G/Top, *)] $.
 Thus the assertion of \cref{quinn2.1} follows from \cref{cor last}.
\end{proof}

\begin{acknowledgement}
The authors would like to thank Daniel Ruberman for pointing out Kreck's theorem can be used in the proof of Proposition~1.1.
Tsuyoshi Kato was supported by JSPS Grant-in-Aid for Scientific Research (B) No.17H02841 and 
JSPS Grant-in-Aid for Scientific Research on Innovative Areas (Research in a proposed research area) No.17H06461.
Hokuto Konno was partially supported by JSPS KAKENHI Grant Numbers 16J05569, 17H06461, 19K23412, 21K13785.
Nobuhiro Nakamura was supported by JSPS Grant-in-Aid for Scientific Research (C) No.19K03506.
\end{acknowledgement}

\begin{bibdiv}
 \begin{biblist}
 
\bib{Boyer}{article}{
 Author = {Boyer, Steven},
 Title = {{Simply-connected $4$-manifolds with a given boundary}},
 Journal = {{Trans. Am. Math. Soc.}},
 ISSN = {0002-9947},
 Volume = {298},
 Pages = {331--357},
 Year = {1986},
 Publisher = {American Mathematical Society (AMS), Providence, RI},
 DOI = {10.2307/2000623},
}

 \bib{BL}{article}{
   author={Burghelea, Dan},
   author={Lashof, Richard},
   title={The homotopy type of the space of diffeomorphisms. I, II},
   journal={Trans. Amer. Math. Soc.},
   volume={196},
   date={1974},
   pages={1--36; ibid. 196 (1974), 37--50},
   issn={0002-9947},
   review={\MR{356103}},
   doi={10.2307/1997010},
}

\bib{CM}{article}{
   author={\v{C}adek, Martin},
   author={Van\v{z}ura, Ji\v{r}\'{\i}},
   title={On the classification of oriented vector bundles over
   $5$-complexes},
   journal={Czechoslovak Math. J.},
   volume={43(118)},
   date={1993},
   number={4},
   pages={753--764},
   issn={0011-4642},
   review={\MR{1258434}},
}

\bib{FQ}{book}{
   author={Freedman, Michael H.},
   author={Quinn, Frank},
   title={Topology of 4-manifolds},
   series={Princeton Mathematical Series},
   volume={39},
   publisher={Princeton University Press, Princeton, NJ},
   date={1990},
   pages={viii+259},
   isbn={0-691-08577-3},
   review={\MR{1201584}},
}

\bib{G}{article}{
   author={Jeffrey, Giansiracusa},
   title={The stable mapping class group of simply connected $4$-manifolds},
   journal={J. reine angew. Math.},
   volume={617},
   date={2008},
   pages={215--235},
}

\bib{H}{article}{
   author={Allen, Hatcher},
   title={A proof of the Smale conjecture, 
   Diff$(S^3) \cong O(4)$},
   journal={Annals of  Mathematics},
   volume={117},
   date={1983},
   number={3},
   pages={553--607},
}

\bib{HM}{book}{
   author={Hirsch, Morris W.},
   author={Mazur, Barry},
   title={Smoothings of piecewise linear manifolds},
   note={Annals of Mathematics Studies, No. 80},
   publisher={Princeton University Press, Princeton, N. J.; University of
   Tokyo Press, Tokyo},
   date={1974},
   pages={ix+134},
   review={\MR{0415630}},
}

\bib{KKN}{article}{
   author={Kato, Tsuyoshi},
   author={Konno, Hokuto},
   author={Nakamura, Nobuhiro},
   title={Rigidity of the mod 2 families Seiberg--Witten invariants and
   topology of families of spin 4-manifolds},
   journal={Compos. Math.},
   volume={157},
   date={2021},
   number={4},
   pages={770--808},
   issn={0010-437X},
   review={\MR{4247572}},
   doi={10.1112/s0010437x2000771x},
}

\bib{Kreck}{article}{
 Author = {M. {Kreck}},
 Title = {{Isotopy classes of diffeomorphisms of (k-1)-connected almost- parallelizable 2k-manifolds}},
note={Algebraic topology, Proc. Symp., Aarhus 1978, Lect. Notes Math. 763, 643-663 (1979)}
 MSC2010 = {57R50 57R52 57R60},
 Zbl = {0421.57009}
}

\bib{KS}{book}{
   author={Kirby, Robion C.},
   author={Siebenmann, Laurence C.},
   title={Foundational essays on topological manifolds, smoothings, and
   triangulations},
   note={With notes by John Milnor and Michael Atiyah;
   Annals of Mathematics Studies, No. 88},
   publisher={Princeton University Press, Princeton, N.J.; University of
   Tokyo Press, Tokyo},
   date={1977},
   pages={vii+355},
   review={\MR{0645390}},
}

\bib{Kis}{article}{
       author = {J. M. Kister},
	title = {Microbundles are fibre bundles},
	 journal={Ann. of Math.},
   volume={80},
   date={1964},
   number={2},
   pages={190--199},
   issn={0011-4642},
   review={\MR{1258434}},
}

\bib{MM}{book}{
   author={Madsen, Ib},
   author={Milgram, R. James},
   title={The classifying spaces for surgery and cobordism of manifolds},
   series={Annals of Mathematics Studies},
   volume={92},
   publisher={Princeton University Press, Princeton, N.J.; University of
   Tokyo Press, Tokyo},
   date={1979},
   pages={xii+279},
   isbn={0-691-08225-1},
   review={\MR{548575}},
}

\bib{May}{book}{
   author={May, J. P.},
   title={A concise course in algebraic topology},
   series={Chicago Lectures in Mathematics},
   publisher={University of Chicago Press, Chicago, IL},
   date={1999},
   pages={x+243},
   isbn={0-226-51182-0},
   isbn={0-226-51183-9},
   review={\MR{1702278}},
}

\bib{Nov}{article}{
   author={Novikov, S. P.},
   title={On manifolds with free abelian fundamental group and their
   application},
   language={Russian},
   journal={Izv. Akad. Nauk SSSR Ser. Mat.},
   volume={30},
   date={1966},
   pages={207--246},
   issn={0373-2436},
   review={\MR{0196765}},
}

\bib{OSThom}{article}{
 Author = {Peter {Ozsv\'ath} and Zolt\'an {Szab\'o}},
 Title = {{The symplectic Thom conjecture}},
 FJournal = {{Annals of Mathematics. Second Series}},
 Journal = {{Ann. Math. (2)}},
 ISSN = {0003-486X; 1939-8980/e},
 Volume = {151},
 Number = {1},
 Pages = {93--124},
 Year = {2000},
 Publisher = {Princeton University, Mathematics Department, Princeton, NJ},
}

\bib{OShigher}{article}{
 Author = {Peter {Ozsv\'ath}},
 Author={Zolt\'an {Szab\'o}},
 Title = {{Higher type adjunction inequalities in Seiberg-Witten theory}},
 FJournal = {{Journal of Differential Geometry}},
 Journal = {{J. Differ. Geom.}},
 ISSN = {0022-040X; 1945-743X/e},
 Volume = {55},
 Number = {3},
 Pages = {385--440},
 Year = {2000},
 Publisher = {International Press of Boston, Somerville, MA},
}

\bib{Perron}{article}{
 Author = {B. {Perron}},
 Title = {{Pseudo-isotopies et isotopies en dimension quatre dans la cat\'egorie topologique. (Pseudo-isotopies and isotopies of dimension four in the topological category)}},
 FJournal = {{Topology}},
 Journal = {{Topology}},
 ISSN = {0040-9383},
 Volume = {25},
 Pages = {381--397},
 Year = {1986},
 Publisher = {Elsevier Science Ltd (Pergamon), Oxford},
 Language = {French},
}

\bib{Quinn1}{article}{
   author={Quinn, Frank},
   title={Isotopy of $4$-manifolds},
   journal={Journal of Differential Geometry},
   volume={24},
   date={1986},
   pages={343--372},
   issn={},
   review={},
   doi={},
}

\bib{Quinn2}{article}{
   author={Quinn, Frank},
   title={A controlled-topology proof of the product structure theorem},
   journal={Geom. Dedicata},
   volume={148},
   date={2010},
   pages={303--308},
   issn={0046-5755},
   review={\MR{2721629}},
   doi={10.1007/s10711-009-9406-x},
}

\bib{Ra}{book}{
   author={Andrew, Ranicki},
   title={Algebraic and geometric surgery},
   series={Oxford Mathematical Monograph},
   publisher={Oxford University  Press},
   date={2002},
}

\bib{Rub}{article}{
   author={Ruberman, Daniel},
   title={An obstruction to smooth isotopy in dimension $4$},
   journal={Math. Res. Lett.},
   volume={5},
   date={1998},
   number={6},
   pages={743--758},
   issn={1073-2780},
   review={\MR{1671187 (2000c:57061)}},
   doi={10.4310/MRL.1998.v5.n6.a5},
}

\bib{Rub2}{article}{
 Author = {Daniel {Ruberman}},
 Title = {{Positive scalar curvature, diffeomorphisms and the Seiberg-Witten invariants}},
 FJournal = {{Geometry \& Topology}},
 Journal = {{Geom. Topol.}},
 ISSN = {1465-3060; 1364-0380/e},
 Volume = {5},
 Pages = {895--924},
 Year = {2001},
 Publisher = {Mathematical Sciences Publishers (MSP), Berkeley, CA; Geometry \& Topology Publications c/o University of Warwick, Mathematics Institute, Coventry},
}

\bib{Rud}{book}{
   author={Rudyak, Yuli},
   title={Piecewise linear structures on topological manifolds},
   publisher={World Scientific Publishing Co. Pte. Ltd., Hackensack, NJ},
   date={2016},
   pages={xxii+106},
   isbn={978-981-4733-78-6},
   review={\MR{3467983}},
   doi={10.1142/9887},
}

\end{biblist}
\end{bibdiv}

\end{document}